\DeclareMathOperator{\ddiv}{div}
\crefname{hypothesis}{Hypothesis}{Hypotheses}
\newtheorem{thm}{Theorem}
\title{Convergence analysis for non-iterative sequential schemes for Biot's model
\thanks{Submitted to the editors in July 2024.
\funding{The work of  Hu is partially supported by the National Science Foundation (NSF) under grant DMS-2208267. . The work of Francisco J. Gaspar and Carmen Rodrigo is supported in part by the Spanish project PID2022-140108NB-I00 (MCIU/AEI/FEDER, UE), and by the DGA (Grupo de referencia APEDIF, ref. E24\_17R). 
}
}
}
\author{
Xiaozhe Hu\thanks{Tufts University, Medford, Massachusetts, USA (\email{xiaozhe.hu@tufts.edu}).}
\and Francisco J. Gaspar \thanks{IUMA and Applied Mathematics Department, University of Zaragoza, Zaragoza, Spain (\email{fjgaspar@unizar.es}, \email{carmenr@unizar.es}).}
\and Carmen Rodrigo \footnotemark[3]
}
\begin{document}

\maketitle

\begin{abstract}
An alternative to the fully implicit or monolithic methods used for the solution of the coupling of fluid flow and deformation in porous media is a sequential approach in which the fully coupled system is broken into subproblems (flow and mechanics problems) that are solved one after the other. This fully explicit coupling approach is a very simple scheme which allows much flexibility in the implementation and has a lower computational cost, making it quite attractive in practice since smaller linear systems need to be solved in order to obtain the solution for the whole coupled poroelastic system. Due to the appealing advantages of these methods, intensive research is currently being carried out in this direction, as in the present work. Although the application of this type of method is very common in practice, there exist only a few works devoted to their theoretical analysis. In this work, we consider the so-called explicit fixed-stress split scheme, which consists of solving the flow problem first with time-lagging the displacement term, followed by the solution of the mechanics problem. To the best of our knowledge, we provide the first convergence analysis of the explicit fixed-stress split scheme for Biot's equations. In particular, we prove that this algorithm is optimally convergent if the considered finite element discretization satisfies an inf-sup condition. In addition, with the aim of designing the simplest scheme for solving Biot's model, we also propose a similar decoupled algorithm for piecewise linear finite elements for both variables which arises from the novel stabilization recently proposed in \cite{Pe2025}, and is demonstrated to be optimally convergent.
\end{abstract}

\begin{keywords}
Biot's model, poromechanics, non-iterative sequential schemes, explicit fixed-stress split method.
\end{keywords}

\begin{MSCcodes}
65M12, 65M22, 65M60, 74F10, 74S05
\end{MSCcodes}

\section{Introduction}\label{sec:intro}

The coupling of fluid flow and mechanical deformation within porous media is a relevant multi-physics problem for many real applications in different fields. Some examples of these important societal applications include geothermal energy extraction, CO2 storage, hydraulic fracturing, and cancer research, among others. Such a coupling was already modeled in the early one-dimensional work of Terzaghi \cite{terzaghi}, whereas Maurice Biot was the one who established its general three-dimensional mathematical formulation in several pioneering publications (see \cite{biot1},\cite{biot2}). This is the reason why this problem is known as Biot's model in the literature, as well as the poroelasticity problem. 

In this work, we consider the quasi-static Biot's model for soil consolidation, where the porous medium is assumed to be linearly elastic, homogeneous, isotropic, and saturated by a Newtonian fluid. Different formulations of this model can be considered depending on which variables one is interested in. The so-called two-field formulation, in which the main variables are the solid displacement and the fluid pressure, is widely used, and it is the one considered in this work. Another formulation which is widely used in practice is  the classical three-field formulation where the Darcy velocity is also included as a main variable. But, of course, there are other different interesting formulations, among which we can highlight  the three-field formulation in which the solid pressure is included as a third variable, or the recently introduced three-field formulation where  the total pressure, which is a weighted sum of fluid and solid pressures, is considered as an unknown (see \cite{Lee}). 

The classical two-field formulation of the model is given by
\begin{eqnarray}
& & - \mathbf{\nabla} \left(2\mu \varepsilon(\mathbf{u}) + \lambda \mathbf{\nabla} \cdot \mathbf{u} \right) + \alpha \mathbf{\nabla} p = \mathbf{f}, \label{biot1} \\
& &  \frac{1}{\beta} \partial_t p + \partial_t\left(\alpha \mathbf{\nabla} \cdot \mathbf{u} \right) - \nabla \cdot \left(K (\nabla p - \rho_f \mathbf{g}) \right) = g, \label{biot2}
\end{eqnarray}
on a space-time domain $\Omega \times (0,T]$, where $\Omega \subset \mathbb{R}^d$, $d \leq 3$, $T > 0$, and we denote the time derivative as $\partial_t$. In~\eqref{biot1}-\eqref{biot2}, $\mu$ and $\lambda$ are the Lam\'e parameters, $\mathbf{u}$ is the displacement vector, $p$ is the fluid pressure, $\varepsilon(\mathbf{u}) = \frac{1}{2} (\mathbf{\nabla} \mathbf{u} + \mathbf{\nabla}^T \mathbf{u})$ is the linearized strain tensor, $\alpha$ is the Biot cofficient, $\beta$ is the Biot modulus, $\rho_f$ is the fluid density, $\mathbf{g}$ is the gravity tensor,  $K $ is the hydraulic conductivity, and $\mathbf{f}$ and $g$ are the source terms for the mechanic and flow problems, respectively.  

Although both finite difference and finite volume methods have been applied successfully for the discretization of Biot’s model \cite{Gaspar2003, Nordbotten2016}, the finite element method is the most widely used numerical technique for solving such a multi-physics problem. Various finite element methods have been proposed in the literature and appropriately deal with the numerical difficulties that it exhibits. Stable finite-element schemes have been developed for the different formulations of Biot’s model. For the classical two-field formulation, Taylor–Hood elements, which satisfy an  inf-sup condition, have been studied \cite{MuradLoula92, MuradLoulaThome}, and appropriate stabilization techniques have also been proposed for unstable finite-element pairs, such as the combination of piecewise linear elements for both variables \cite{Pe2025, 2016RodrigoGasparHuZikatanov-a}. 


After discretization of the model, there exist three different approaches which are frequently applied to solve these problems  (see \cite{10.2118/79709-PA} to see a comparison of the three approaches emphasizing their advantages and drawbacks).  The first class of  algorithms includes the so-called fully implicit or monolithic methods in which all the unknowns are solved simultaneously on each time level. The main drawback of this type of scheme is its high computational cost due to the need to solve complex ill-conditioned linear systems, which requires the design of suitable preconditioners to accelerate the convergence of  Krylov subspace methods \cite{Adler2020, Berga2007, Bærland2017, Chen2020, Ferronato, Haga, Hong} or the design of appropriate smoothers within a multigrid framework \cite{Adler2023, Gaspar2017, peiyao_poro}. 

The second type of methods is the iterative coupling scheme in which, on each time step, either the flow or the mechanics part is solved first, followed by the solution of the other subproblem, repeating this process until a converged solution within a prescribed tolerance is obtained \cite{iterative_coupling}. These methods are quite attractive in practice, since smaller linear systems need to be solved for which one can apply well-known efficient linear solvers. In addition, their main advantage lies in the possibility of combining existing software for simulating fluid flow and geomechanics problems in order to obtain the solution for the whole coupled poroelastic system. The iterative coupling method most commonly used in practice is the so-called fixed-stress splitting method \cite{it_coup_fixed_stress}, which basically consists of solving the flow problem first by fixing the volumetric mean total stress, and then solving the mechanics part from the values obtained at the previous flow step. This method requires the choice of a certain stabilization parameter which has to be sufficiently large in order to ensure the convergence of the iteration \cite{BOTH2017101}. The fixed-stress split method has been rigorously shown to be unconditionally stable in the sense of a Von Neumann analysis \cite{it_coup_fixed_stress} and to be convergent \cite{BOTH2017101, Mikelic}. Depending on the problem, however, the convergence of the method may become slow \cite{Caste2015}. 

Finally, the third class of algorithms for solving Biot's model consists of the explicit or non-iterative sequential schemes in which the system is decoupled and no iterations are needed between the subproblems on each time step. The main advantages of these methods are their low computational cost compared with the previous algorithms and the simplicity of their implementation, but potentially at a cost of a less accurate numerical solution by using the same discretization parameters.  Due to the appealing advantages of these methods, intensive research is currently being carried out in this direction. In \cite{Lee2}, Lee has studied the stability and the convergence of two decoupled schemes for the three-field formulation, with the total pressure as an unknown, of the quasi-static multiple-network poroelasticity problem. In this way, instead of solving a very complex coupled problem, a linear elasticity equation and a system of parabolic equations have to be solved on each time step. Using the same three-field formulation but for the Biot's model, in \cite{osti_10471422} a decoupled scheme is proposed where the problem is split into a Stokes subproblem and a reaction-diffusion subproblem. This algorithm also has been extended to the case of the multiple-network poroelasticity model in \cite {ZHAO2025114214}.

Explicit coupling schemes have also been studied for the two-field formulation of Biot's model in several works. For example, in  \cite{ALMANI2019} the authors analyze a decoupled scheme based on first solving the mass equation \eqref{biot2} by time-lagging the displacement vector term and then by solving the mechanic problem. They  perform a stability analysis and derive the conditions needed for the physical parameters  to ensure stability for such a scheme. In addition, they observed that the explicit coupling schemes reduce the CPU run time significantly when compared to the iterative coupling schemes. For the same scheme, in \cite{CHAABANE2018} the authors derive a priori error estimates when stabilization terms are added to the mechanic equation. An interesting approach is presented  in \cite{Altmann} for different weakly coupled problems, such as the poroelaticity system. In that work, the convergence proofs are based on an interpretation of the explicit scheme as an implicit method applied to a constrained partial differential equation with a delay term. Finally, in \cite{KOLESOV20142185}, the authors obtain stability estimates of some splitting techniques by using Samarskii’s theory of stability for operator-difference schemes. 

The explicit coupling approach considered here is the so-called explicit fixed-stress split scheme, which is widely used in practice (see for example \cite{MINKOFF200337,10.2118/0401-0080-JPT, 10.2118/79709-PA} and the references therein). In such a decoupled method, the flow problem \eqref{biot2}  is solved first by time-lagging the displacement term, followed by solving the mechanics problem \eqref{biot1}. As in the iterative coupling fixed-stress split method, a stabilization term should be added in the flow equation for convergence reasons. To the best of our knowledge, the analysis presented in this work is the first convergence analysis of the explicit fixed-stress split scheme for Biot's equations. We prove that this algorithm is optimally convergent if the considered finite element discretization satisfies an inf-sup condition. In addition, with the aim of designing the simplest scheme for solving Biot's model, we also propose a similar algorithm for piecewise linear finite elements for both variables with the novel stabilization recently proposed in \cite{Pe2025}. We also prove in this work that such a scheme is  optimally convergent.


The structure of the paper is as follows. In \cref{sec:2}, we consider a discrete scheme satisfying an appropriate inf-sup condition, introduce the explicit fixed-stress split scheme, and present appropriate convergence estimates that prove the robust optimal convergence of the decoupled scheme for such a discretization. After that, \cref{sec:3} is devoted to dealing with one of the simplest discretizations for Biot's model: the P1-P1 scheme (piecewise linear finite element methods for both displacement and pressure). In this section we recall the stabilization proposed in~\cite{Pe2025} for the P1-P1 scheme, which naturally gives rise to a non-iterative coupling method similar to the previously introduced explicit fixed-stress split scheme, and demonstrate that such an algorithm is optimally convergent. \cref{sec:4} presents some numerical experiments that support the theoretical results previously introduced for both discrete schemes, and finally in \cref{sec:conclusions} some conclusions are drawn.

\section{Inf-sup Stable Discretization}
\label{sec:2}
In this section, we consider a stable finite element discretization of  the quasi-static Biot's model \eqref{biot1}-\eqref{biot2}. For simplicity, we assume homogeneous Dirichlet boundary conditions for both displacements and pressure on $\partial \Omega \times (0,T]$, and the initial conditions $\mathbf{u}(\mathbf{x},0) = \mathbf{u_0} (\mathbf{x})$, $p(\mathbf{x},0) = p_0 (\mathbf{x})$. According to the imposed boundary conditions, we consider the Sobolev spaces ${\mathbf V} = (H^1_0(\Omega))^d$ and $Q = H^1_0(\Omega)$, where $H^1_0(\Omega)$ denotes the Hilbert subspace of $L^2(\Omega)$ of functions with first weak derivatives in $L^2(\Omega)$ that are zero on the boundary of $\Omega$. We use $(\cdot,\cdot)$ to denote the inner product in $L^2(\Omega)$ and define the bilinear forms,
\begin{eqnarray*}\label{bilinear}
a(\bm{u},\bm{v}) = 2\mu \int_{\Omega}{ \varepsilon}(\bm{u}):{ \varepsilon}(\bm{v}) \, {\rm d} \Omega +
\lambda\int_{\Omega} \ddiv\bm{u}\ddiv\bm{v} \, {\rm d} \Omega, \qquad 
b(p,q) =  \int_{\Omega} K \nabla p \cdot \nabla q \, {\rm d} \Omega.
\end{eqnarray*}
The variational formulation for the two-field formulation of Biot's model is now: Find $({\mathbf u}(t), p(t))\in {\mathcal C}^1([0,T]; {\mathbf V}) \times {\mathcal C}^1([0,T];Q)$, such that,
\begin{eqnarray}
  && a(\bm{u},\bm{v}) -\alpha (p, \ddiv \bm{v})  
  = (\bm{f},\bm{v}),
     \quad \forall \  \bm{v} \in \bm V, \label{variational1}\\
  &&  \frac{1}{\beta} (\partial_t{p},q) + \alpha(\ddiv \partial_t{\bm{u}},q)  + b(p,q)  = (g,q),
   \; \forall \ q \in Q.\label{variational2}
\end{eqnarray}
We now describe the numerical approximation of this problem. Let $\mathcal{T}_h$ be a partition of $\Omega\subset \mathbb{R}^d$ consisting of triangles ($d=2)$ or tetrahedrons ($d=3$). For the spatial discretization of the Biot's model, we choose an inf-sup stable finite-element pair of spaces ${\bf V}_h \times Q_h$ to approximate the displacement and the pressure, respectively, i.e., we assume that the pair of finite element spaces satisfies the  inf-sup condition 
 \begin{align}\label{ine_inf-sup}
 	\sup_{\bm{w}_h \in  \bm{V}_h} \frac{(\nabla \cdot \bm{w}_h, q_h)}{ \| \bm{w}_h \|_A} \geq \eta \frac{1}{\sqrt{\lambda + 2\mu/d}} \| q_h \|, \quad \forall \, q_h \in Q_h,
  \end{align}
 where $\eta > 0$  is a constant that does not depend on the mesh size or the physical parameters. Taylor-Hood finite element \cite{MuradLoula92} and MINI-element \cite{2016RodrigoGasparHuZikatanov-a} are some examples of discretizations satisfying inequality \eqref{ine_inf-sup}. \\
 
Given any inf-sup stable finite element spaces ${\bf V}_h \times Q_h$, the semi-dsicretization of problem \eqref{variational1}-\eqref{variational2} is: for each $t \in (0,T]$, find $(\bm{u}_h(t), p_h(t)) \in  {\bf V}_h \times Q_h$ such that
 \begin{eqnarray}
  && a(\bm{u}_h(t),\bm{v}_h) -\alpha (p_h(t), \ddiv \bm{v}_h)  
  = (\bm{f},\bm{v}_h ),
     \quad \forall \  \bm{v}_h \in \bm V_h, \label{variational1_semi}\\
  &&  \frac{1}{\beta} (\partial_t p_h(t),q_h) + \alpha(\ddiv {\partial_t \bm{u}_h(t)},q_h)  + b(p_h(t),q_h)  = (g,q_h),
   \; \forall \ q_h \in Q_h.\label{variational2_semi}
\end{eqnarray}

\subsection{Numerical Scheme}
We now focus on the time discretization of problem \eqref{variational1_semi}-\eqref{variational2_semi}. We consider a uniform partition of the time interval $(0, T]$, $t_j=j\tau$, $j=0,\ldots, N$, with time-step $\tau=T/N$, and let $(\bm{u}_h^{j},p_h^{j})$ be the approximation of $(\bm{u}_h(t), p_h(t))$ at time level $t_j$.  We consider  the backward Euler method to discretize the first term in \eqref{variational2_semi} and a forward Euler method for the second term in \eqref{variational2_semi}. Notice that the resulting fully discrete problem is an explicit coupling approach  in which on each time level the flow problem is solved first followed for the mechanics problem, and therefore, there is no coupling between both problems anymore, as shown in the following algorithm:
\begin{algorithm}[H]
\caption{Explicit coupling algorithm}  \label{alg:explicit}                              
\begin{algorithmic}                                                         
\For{$j=1,2,\ldots,N-1$}
\State {\bf Step 1:} Given $({\bm u}_h^{j},{\bm u}_h^{j-1},p_h^{j}) \in {\bm V}_h \times {\bm V}_h \times Q_h$, find $p_h^{j+1} \in Q_h$ such that
\begin{align*}  
   \frac{1}{\beta} \left(\frac{p_h^{j+1}-p_h^{j}}{\tau},q_h\right)&+ b(p_h^{j+1},q_h) \\[-.1in]
  & = -\alpha \left(\ddiv \frac{\bm{u}_h^{j}-\bm{u}_h^{j-1}}{\tau},q_h\right) + (g_h^{j+1},q_h),
   \quad \forall \ q_h \in Q_h, \label{total_discrete_variational_split_pressure_2} 
\end{align*}
\State {\bf Step 2:} Given $p_h^{j+1} \in Q_h$, find ${\bm u}_h^{j+1} \in {\bm V}_h$ such that
\begin{equation*} \label{total_discrete_variational_split_displacement_2}
a(\bm{u}_h^{j+1},\bm{v}_h) =  \alpha( p_h^{j+1}, \ddiv \bm{v}_h) + (\bm{f}_h^{j+1},\bm{v}_h),
     \quad \forall \  \bm{v}_h \in \bm V_h. 
\end{equation*}
\EndFor
\end{algorithmic}
\end{algorithm}
Notice that to obtain $p_h^{j+1}$, solutions of ${\bm u}_h^{j}$, ${\bm u}_h^{j-1}$ and $p_h^{j}$ are required. In particular, we need ${\bm u}_h^{0}$, ${\bm u}_h^{1}$ and $p_h^{1 }$ to get $p_h^{2}$. Since we can not use the above scheme to obtain $p_h^{1}$, a fully implicit scheme is used to obtain the solutions at the first time level, i.e., ${\bm u}_h^{1}$ and $p_h^{1 }$, are obtained by solving 
 \begin{align}
  & a(\bm{u}_h^1,\bm{v}_h) -\alpha (p_h^1, \ddiv \bm{v}_h)  
  = (\bm{f}_h^1,\bm{v}_h ),
     \quad \forall \  \bm{v}_h \in \bm V_h, \label{eqn:u_1} \\
  &  \frac{1}{\beta} \left(\frac{p_h^1-p_h^0}{\tau} ,q_h\right) + \alpha\left(\ddiv \frac{\bm{u}_h^{1}-\bm{u}_h^{0}}{\tau} ,q_h\right)  + b(p_h^1,q_h)  = (g_h^1,q_h),
   \; \forall \ q_h \in Q_h.\label{eqn:p_1}
\end{align}

As shown in \cite{ALMANI2019}, Algorithm \eqref{alg:explicit} is conditionally stable. In particular, the condition $1/\beta > \alpha^2/\lambda$ is required in order to ensure stability. Moreover, it was shown that instabilities can occur if such a condition is violated. For this reason, different stabilization techniques have been proposed. Among them, the explicit fixed-stress split scheme is the most used in practice. Such a scheme, still decoupled,  is based on adding two artificial terms in the flow equation, one being discretized by a backward Euler scheme and the other by a forward Euler scheme, as it can be seen in \cref{alg:iterative_stable} .

Here, $L$ is a tuning parameter chosen sufficiently large to ensure the convergence of the scheme. Again, since the solution of the pressure depends on the solutions at the two previous time steps, scheme \eqref{eqn:u_1}-\eqref{eqn:p_1} can be used to obtain the displacement and the pressure at the first time level. In the next subsection, a convergence analysis of the explicit fixed-stress split scheme for Biot's equations is presented. To the best of our knowledge, this is the first convergence analysis of the explicit fixed-stress split scheme for Biot's model.

\begin{algorithm}[H]
	\caption{Explicit fixed-stress split algorithm}  \label{alg:iterative_stable}                              
	\begin{algorithmic}                                                         
		\For{$j=1,2,\ldots,N-1$}
		\State {\bf Step 1:} Given $({\bm u}_h^{j},{\bm u}_h^{j-1},p_h^{j},p_h^{j-1}) \in {\bm V}_h \times {\bm V}_h \times Q_h \times Q_h $, find $p_h^{j+1} \in Q_h$ such that:
		\begin{eqnarray}  
			&  & \quad \ \frac{1}{\beta} \left(\frac{p_h^{j+1}-p_h^{j}}{\tau},q_h\right) + L \left(\frac{p_h^{j+1}-p_h^{j}}{\tau},q_h\right)  + b(p_h^{j+1},q_h)  \label{pressure}  \\
			&&=  -\alpha \left(\ddiv \frac{\bm{u}_h^{j}-\bm{u}_h^{j-1}}{\tau},q_h\right)  
			+ L \left(\frac{p_h^{j}-p_h^{j-1}}{\tau},q_h\right)  + (g_h^{j+1},q_h),
			\quad \forall \ q_h \in Q_h, \nonumber
		\end{eqnarray}
		\State {\bf Step 2:} Given $p_h^{j+1} \in Q_h$, find ${\bm u}_h^{j+1} \in {\bm V}_h$ such that
		\begin{equation} \label{displacement}
			a(\bm{u}_h^{j+1},\bm{v}_h) =  \alpha( p_h^{j+1}, \ddiv \bm{v}_h) + (\bm{f}_h^{j+1},\bm{v}_h),
			\quad \forall \  \bm{v}_h \in \bm V_h. 
		\end{equation}
		\EndFor
	\end{algorithmic}
\end{algorithm} 

\subsection{Convergence Analysis} \label{sec:in-sup-stable-convergence-analysis}
For the theoretical study of the convergence of \cref{alg:iterative_stable}, we first define the elliptic projections as usual. For any $t$, define $\bar{\bm{u}}_h(t)$ and $\bar{p}_h(t)$ satisfying the following,
\begin{align}
	a(\bar{\bm{u}}_h, \bm{v}_h) - \alpha (\bar{p}_h, \nabla \cdot \bm{v}_h) &= a(\bm{u}, \bm{v}_h) - \alpha (p, \nabla \cdot \bm{v}_h), \quad \forall \, \bm{v}_h \in \bm{V}_h, \label{eqn:elliptic-proj-u} \\
	b(\bar{p}_h, q_h) &= b(p, q_h), \quad \forall \, q_h \in Q_h. \label{eqn:elliptic-proj-p}
\end{align}
Now, we can decompose the errors at $t=t_j$, $e_{\bm{u}}^{j}  = \bm{u}(t_j) - \bm{u}_h^j$ and  $e_{p}^{j} = p(t_j) - p_h^j$  in two parts as follows,
 \begin{align*}
 	e_{\bm{u}}^{j} &= \bm{u}(t_j) - \bm{u}_h^j = (\bm{u}(t_j) - \bar{\bm{u}}_h(t_j)) - (\bm{u}_h^j - \bar{\bm{u}}_h(t_j)) := \rho_{\bm{u}}(t_j) - \theta_{\bm{u}}^j, \\
 	e_{p}^{j} &= p(t_j) - p_h^j = (p(t_j) - \bar{p}_h(t_j)) - (p_h^j - \bar{p}_h(t_j)) := \rho_{p}(t_j) - \theta_{p}^j,
 \end{align*}
 where $\rho_{\bm{u}}(t_j)$ and $\rho_{p}(t_j)$ denote the difference between the exact solution and the elliptic projection, respectively,  and $\theta_{\bm{u}}^j$ and $\theta_{p}^j$ represent the difference between the approximate solution and the elliptic projection, respectively.  Let us assume that the inf-sup stable finite-element pair $\mathbf{V}_h$ and $Q_h$ provide $k$-th and $\ell$-th order approximation to the displacement and the pressure, respectively. Then for the error of the elliptic projections (see \cite{Murad1} for details), we have, for all $t$, 
 \begin{align}
 	\| \rho_{\bm{u}} \|_A &\leq c h^{\min(k, \ell+1)} \left( |\bm{u}|_{k+1} + |p|_{\ell+1} \right), \label{ine:err-rho-u-A} \\
 	\| \rho_p \|_B & \leq ch^{\ell} |p|_{\ell+1} , \quad 
 	\| \rho_p \| \leq ch^{\ell+1} |p|_{\ell+1} \label{ine:err-rho-p-B}
 \end{align}
Additionally, we have similar estimates for $\partial_t \rho_{\bm{u}}$ and $\partial_t \rho_p$, where on the right-hand side of the inequalities we have norms of $\partial_t \bm{u}$ and $\partial_t p$ instead of the norm of $\bm{u}$ and $p$, respectively. Here,  $\| \cdot \|_A$ and $\| \cdot \|_B$ are the norms associated with the bilinear forms $a(\cdot, \cdot)$ and $b(\cdot, \cdot)$, respectively.

 For the sake of simplicity, we introduce the following notation
 \begin{align*}
 	\delta \theta_{\bm{u}}^j &:= \theta_{\bm{u}}^j - \theta_{\bm{u}}^{j-1}, \quad d_t \theta_{\bm{u}}^j:= \frac{\delta \theta_{\bm{u}}^j}{\tau}, \quad  d_{tt} \theta_{\bm{u}}^j := \frac{d_t \theta_{\bm{u}}^{j+1} - d_t \theta_{\bm{u}}^j}{\tau}, \\
 	\delta \theta_{p}^j &:= \theta_{p}^j - \theta_{p}^{j-1}, \quad d_t \theta_{p}^j:= \frac{\delta \theta_{p}^j}{\tau}, \quad  d_{tt} \theta_{p}^j := \frac{d_t \theta_{p}^{j+1} - d_t \theta_{p}^j}{\tau}, \\
	R_p^{j} &: = \partial_t p(t_{j}) - d_t \bar{p}_h(t_{j}),  \quad \quad R_{\bm{u}}^{j} : = \partial_t \bm{u}(t_{j}) - d_t \bar{\bm{u}}_h(t_{j}). 
 \end{align*}
 We further assume that the initial condition satisfies 
 \begin{align}
 a(\bm{u}_h^0, \bm{v}_h) - \alpha (p_h^0, \nabla \cdot \bm{v}_h) &= (f_h^0, \bm{v}_h), \quad \forall \, \bm{v}_h \in \bm{V}_h, \label{eqn:ini-u} \\
 \alpha (\nabla \cdot \bm{u}_h^0, q_h) &= (g_h^0, q_h), \quad \forall \, q_h \in Q_h. \label{eqn:ini-p}
 \end{align}
Next, we prove the following lemmas, which will be used to establish the convergence of the numerical scheme.
 \begin{lemma} \label{est_time_0}
 The following estimate holds
 \begin{equation} \label{ine:A_norm_u>L2_norm_p}
	\| \delta \theta_{\bm{u}}^{j+1} \|^2_A \geq \eta^2\frac{\alpha^2}{\lambda + 2\mu/d} \| \delta \theta_p^{j+1} \|^2, \quad \text{for} \ j=0,1, \ldots, N-1. 
\end{equation}
 \end{lemma}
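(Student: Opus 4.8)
The plan is to exploit the fact that the mechanics step of \cref{alg:iterative_stable} is solved implicitly, so the discrete displacement and pressure iterates are linked by exactly the same relation as the one defining the continuous/elliptic mechanics problem. Concretely, I would first establish that for every $j=0,1,\ldots,N$,
\begin{equation*}
	a(\theta_{\bm{u}}^j, \bm{v}_h) = \alpha(\theta_p^j, \ddiv \bm{v}_h), \qquad \forall\, \bm{v}_h \in \bm{V}_h .
\end{equation*}
For $j\geq 1$ this follows by subtracting the mechanics step \eqref{displacement} (respectively \eqref{eqn:u_1} when $j=1$) from the combination of the elliptic projection definition \eqref{eqn:elliptic-proj-u} with the variational equation \eqref{variational1} evaluated at $t=t_j$; the load terms cancel. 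For $j=0$ the same identity comes from the compatibility of the initial data in \eqref{eqn:ini-u} together with \eqref{eqn:elliptic-proj-u} and \eqref{variational1} at $t=0$. Note that $\theta_{\bm{u}}^j\in\bm{V}_h$ and $\theta_p^j\in Q_h$ since both $\bm{u}_h^j$, $\bar{\bm{u}}_h(t_j)$ and $p_h^j$, $\bar p_h(t_j)$ lie in the discrete spaces.

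Next I would take the first difference of this identity between two consecutive time levels to obtain, for $j=0,1,\ldots,N-1$,
\begin{equation*}
	a(\delta \theta_{\bm{u}}^{j+1}, \bm{v}_h) = \alpha(\delta \theta_p^{j+1}, \ddiv \bm{v}_h), \qquad \forall\, \bm{v}_h \in \bm{V}_h .
\end{equation*}
Testing with an arbitrary $\bm{w}_h\in\bm{V}_h$ and using the Cauchy--Schwarz inequality for the inner product $a(\cdot,\cdot)$ gives $\alpha(\ddiv \bm{w}_h,\delta\theta_p^{j+1}) = a(\delta\theta_{\bm{u}}^{j+1},\bm{w}_h) \leq \|\delta\theta_{\bm{u}}^{j+1}\|_A\,\|\bm{w}_h\|_A$, hence
\begin{equation*}
	\sup_{\bm{w}_h \in \bm{V}_h} \frac{(\ddiv \bm{w}_h, \delta \theta_p^{j+1})}{\|\bm{w}_h\|_A} \;\leq\; \frac{1}{\alpha}\,\|\delta \theta_{\bm{u}}^{j+1}\|_A .
\end{equation*}

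Finally I would invoke the inf-sup condition \eqref{ine_inf-sup} with $q_h=\delta\theta_p^{j+1}\in Q_h$ to bound the left-hand side from below by $\eta(\lambda+2\mu/d)^{-1/2}\|\delta\theta_p^{j+1}\|$, and square the resulting inequality to reach \eqref{ine:A_norm_u>L2_norm_p}. I do not expect a genuine obstacle here: the estimate is an essentially immediate consequence of the inf-sup condition and the Cauchy--Schwarz inequality once the displacement--pressure identity is available. The only point requiring care is the bookkeeping that makes this identity hold uniformly in $j$, including the first time level, where one must use the implicit start-up scheme \eqref{eqn:u_1}--\eqref{eqn:p_1} and the compatibility assumption \eqref{eqn:ini-u} on the initial data.
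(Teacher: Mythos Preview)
Your proposal is correct and follows essentially the same approach as the paper: establish the identity $a(\theta_{\bm{u}}^j,\bm{v}_h)=\alpha(\theta_p^j,\ddiv\bm{v}_h)$ for all $j$ (including $j=0,1$ via \eqref{eqn:ini-u} and \eqref{eqn:u_1}), take differences, and combine Cauchy--Schwarz with the inf--sup condition \eqref{ine_inf-sup}. The only cosmetic difference is that the paper picks a specific witness $\bm{w}_h$ realizing the inf--sup bound, whereas you phrase the same step as a supremum; the content is identical.
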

 \begin{proof}
  From the inf-sup condition \eqref{ine_inf-sup}, for any given $\delta \theta_p^{j+1}$, there exist a $\bm{w}_h \in \bm{V}_h$, such that
$
		(\nabla \cdot \bm{w}_h,  \delta \theta_p^{j+1}) \geq \eta \frac{1}{\sqrt{\lambda + 2\mu/d}} \| \delta \theta_p^{j+1} \| \| \bm{w}_h \|_A$, $\| \bm{w}_h \|_A = \| \delta \theta_p^{j+1} \|
		$.
Based on the definitions of $\theta_{\bm{u}}^j$ and $\theta_p^j$, from \eqref{variational1}, \eqref{displacement}, and \eqref{eqn:ini-u}, we have 
 \begin{align}
 &a(\theta_{\bm{u}}^{j}, \bm{v}_h) - \alpha(\theta_p^{j}, \nabla \cdot \bm{v}_h) = 0, \quad \forall \, \bm{v}_h \in \bm{V}_h,    \quad \text{for} \ j=0, 1, \ldots, N.    \label{eqn:theta_u_j+2} 
 \end{align}
Considering \eqref{eqn:theta_u_j+2} for the difference between $t=t_j$ and $t=t_{j+1}$, we arrive at
 \begin{equation} \label{eqn:delta_theta_u_j2}
 	a( \delta \theta_{\bm{u}}^{j+1}, \bm{v}_h) - \alpha(\delta \theta_p^{j+1}, \nabla \cdot \bm{v}_h) = 0, \quad \forall \, \bm{v}_h \in \bm{V}_h, \quad \text{for} \ j =0, 1, \ldots, N-1. 
 \end{equation}
Testing \eqref{eqn:delta_theta_u_j2} with $\bm{v}_h = \bm{w}_h$, we have
\begin{align*}
	 \quad  \frac{\alpha \eta}{\sqrt{\lambda + 2\mu/d}} \| \delta \theta_p^{j+1} \|  \| \bm{w}_h \|_A \leq \alpha (\delta \theta_p^{j+1}, \nabla \cdot \bm{w}_h) = a(\delta \theta_{\bm{u}}^{j+1}, \bm{w}_h) \leq \| \delta \theta_{\bm{u}}^{j+1} \|_A \| \bm{w}_h \|_A,
\end{align*}
which implies inequality \eqref{ine:A_norm_u>L2_norm_p}.
 \end{proof}
 \begin{lemma} \label{est_time_1}
If $L = \omega \frac{\alpha^2}{\lambda + 2\mu/d}$, $\omega \geq 1$, at the first time level, i.e., $j=1$, the following inequality holds
\begin{align}
	 \frac{L}{2} \| \delta \theta_p^1 \|^2 +  \frac{\tau}{2} \| \theta_p^1 \|_B^2 \leq \max \left\{ \frac{2\omega}{\eta^2},1 \right\} \left( \frac{\tau}{2} \| \theta_p^0 \|_B^2 + \frac{\tau^2}{4 \beta} \| R_p^1 \|^2 + \frac{\tau^2}{2\eta^2} \| R_{\bm{u}}^1 \|_A^2 \right). \label{ine:error_step_1}
\end{align}
 \end{lemma}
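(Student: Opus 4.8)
The plan is to derive an error equation at the first time level, test it with $q_h=\delta\theta_p^1$, and then rescale. Evaluating the continuous flow equation~\eqref{variational2} at $t=t_1$ and subtracting the fully implicit first-level scheme~\eqref{eqn:p_1}, using the definitions of the elliptic projections together with $b(\rho_p(t_1),q_h)=0$ from~\eqref{eqn:elliptic-proj-p} and the definitions of $R_p^1$ and $R_{\bm u}^1$, and finally multiplying by $\tau$, I obtain
\begin{equation*}
\frac{1}{\beta}(\delta\theta_p^1,q_h)+\alpha(\ddiv\delta\theta_{\bm u}^1,q_h)+\tau\,b(\theta_p^1,q_h)=\frac{\tau}{\beta}(R_p^1,q_h)+\alpha\tau(\ddiv R_{\bm u}^1,q_h),\qquad\forall\,q_h\in Q_h.
\end{equation*}
Testing with $q_h=\delta\theta_p^1$ and choosing $\bm v_h=\delta\theta_{\bm u}^1$ in~\eqref{eqn:delta_theta_u_j2} (with $j=0$) turns the coupling term into $\alpha(\ddiv\delta\theta_{\bm u}^1,\delta\theta_p^1)=\|\delta\theta_{\bm u}^1\|_A^2$; this is the point at which the displacement $A$-norm enters, and it is what makes the factor $L$ recoverable at the end.

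For the remaining terms I would proceed as follows. The diffusion term gives $\tau\,b(\theta_p^1,\delta\theta_p^1)=\tau\|\theta_p^1\|_B^2-\tau\,b(\theta_p^1,\theta_p^0)\ge\frac{\tau}{2}\|\theta_p^1\|_B^2-\frac{\tau}{2}\|\theta_p^0\|_B^2$ by Cauchy--Schwarz and Young. The first consistency term is handled by the weighted Young inequality $\frac{\tau}{\beta}(R_p^1,\delta\theta_p^1)\le\frac{1}{\beta}\|\delta\theta_p^1\|^2+\frac{\tau^2}{4\beta}\|R_p^1\|^2$, chosen so that the $\|\delta\theta_p^1\|^2$ part exactly cancels the one generated on the left. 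The second consistency term is the one that needs~\cref{est_time_0}: writing $\|\delta\theta_p^1\|\le\frac{\sqrt{\lambda+2\mu/d}}{\eta\alpha}\|\delta\theta_{\bm u}^1\|_A$ and $\|\ddiv R_{\bm u}^1\|\le\frac{1}{\sqrt{\lambda+2\mu/d}}\|R_{\bm u}^1\|_A$ (the latter from $\|\ddiv\bm v\|^2\le d\,\|\varepsilon(\bm v)\|^2$), all factors of $\alpha$ and $\sqrt{\lambda+2\mu/d}$ cancel and one gets $\alpha\tau(\ddiv R_{\bm u}^1,\delta\theta_p^1)\le\frac{\tau}{\eta}\|R_{\bm u}^1\|_A\|\delta\theta_{\bm u}^1\|_A\le\frac12\|\delta\theta_{\bm u}^1\|_A^2+\frac{\tau^2}{2\eta^2}\|R_{\bm u}^1\|_A^2$, with the $\|\delta\theta_{\bm u}^1\|_A^2$ part absorbed on the left.

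Collecting these estimates and cancelling the absorbed quantities leaves the intermediate bound
\begin{equation*}
\frac12\|\delta\theta_{\bm u}^1\|_A^2+\frac{\tau}{2}\|\theta_p^1\|_B^2\le\frac{\tau}{2}\|\theta_p^0\|_B^2+\frac{\tau^2}{4\beta}\|R_p^1\|^2+\frac{\tau^2}{2\eta^2}\|R_{\bm u}^1\|_A^2 .
\end{equation*}
To finish, I would use~\cref{est_time_0} once more, in the form $\frac12\|\delta\theta_{\bm u}^1\|_A^2\ge\frac{\eta^2}{2\omega}L\,\|\delta\theta_p^1\|^2$ (recalling $L=\omega\frac{\alpha^2}{\lambda+2\mu/d}$), and multiply the whole inequality by $\max\{2\omega/\eta^2,1\}$. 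Since this factor is $\ge 2\omega/\eta^2$, the left-hand side is then bounded below by $\frac{L}{2}\|\delta\theta_p^1\|^2+\frac{\tau}{2}\|\theta_p^1\|_B^2$; since it is $\ge 1$, the $\|\theta_p^1\|_B^2$ contribution is not degraded; and the right-hand side becomes precisely~\eqref{ine:error_step_1}.

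The main obstacle here is not any deep argument but the constant bookkeeping: one must test with $\delta\theta_p^1$ rather than $\theta_p^1$ (the latter would produce $L^2(\Omega)$ norms of $\theta_p^0$ and $\theta_p^1$ that cannot be absorbed uniformly in $\tau$), and the two consistency terms must be split with exactly these Young weights so that $\|\delta\theta_{\bm u}^1\|_A^2$ and $\|\theta_p^1\|_B^2$ are left on the left-hand side with the coefficients $1/2$ and $\tau/2$ that the final rescaling requires. One also relies on the consistency of the fully discrete equations at $t_1$ with the continuous ones, so that the source terms cancel and~\eqref{eqn:theta_u_j+2} holds for $j=0$ and $j=1$, which is what the initial assumptions~\eqref{eqn:ini-u}--\eqref{eqn:ini-p} guarantee.
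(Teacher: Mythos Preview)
Your proof is correct and follows essentially the same route as the paper: derive the error equation at $j=1$, test with $\bm v_h=\delta\theta_{\bm u}^1$ and $q_h=\delta\theta_p^1$, bound the two consistency terms with the same Young weights, and finally invoke \cref{est_time_0} together with $L=\omega\alpha^2/(\lambda+2\mu/d)$ and the rescaling by $\max\{2\omega/\eta^2,1\}$. The only cosmetic difference is that you apply \cref{est_time_0} twice (once to convert $\|\delta\theta_p^1\|$ to $\|\delta\theta_{\bm u}^1\|_A$ in the $R_{\bm u}^1$ term, and once at the end), whereas the paper applies it once up front to replace $\|\delta\theta_{\bm u}^1\|_A^2$ by $\eta^2\frac{\alpha^2}{\lambda+2\mu/d}\|\delta\theta_p^1\|^2$ and then absorbs the $R_{\bm u}^1$ contribution directly in that norm---both orderings yield the same intermediate inequality.
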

  \begin{proof}
  Using \eqref{eqn:delta_theta_u_j2},  \eqref{variational2}, \eqref{eqn:p_1}, and \eqref{eqn:ini-p}, we have
\begin{align*}
 &a(\delta \theta_{\bm{u}}^{1}, \bm{v}_h) - \alpha(\delta \theta_p^{1}, \nabla \cdot \bm{v}_h) = 0, \quad \forall \, \bm{v}_h \in \bm{V}_h  \\
&\frac{1}{\beta} (\delta \theta_{p}^{1}, q_h) + \alpha (\nabla \cdot \delta \theta_{\bm{u}}^{1}, q_h) + \tau \, b(\theta_p^{1}, q_h)  = \frac{\tau}{\beta} (R_p^{1}, q_h) + \tau \alpha(\nabla \cdot R_{\bm{u}}^{1}, q_h) , \quad \forall \, q_h \in Q_h, 
\end{align*}
Taking $\bm{v}_h = \delta \theta_{\bm{u}}^1$ and $q_h = \delta \theta_p^1$ and adding the above two equations, we arrive at
\begin{align*}
\| \delta \theta_{\bm{u}}^1 \|_A^2 + \frac{1}{\beta} \| \delta \theta_p^1 \|^2 + \tau b(\theta_p^1, \theta_p^1 - \theta_p^0) = \frac{\tau}{\beta} (R_p^1, \delta \theta_p^1) + \tau \alpha(\nabla \cdot R_{\bm{u}}^1, \delta \theta_p^1).
\end{align*}
Thus, using \eqref{ine:A_norm_u>L2_norm_p} ($j=0$), the Cauchy-Schwarz inequality, and the fact that $a(\bm{u}, \bm{u}) \geq (\lambda + 2\mu/d) \| \nabla \cdot \bm{u} \|^2$, we have
\begin{align*}
	& \quad \eta^2 \frac{\alpha^2}{\lambda + 2\mu/d} \| \delta \theta_p^1 \|^2 + \frac{1}{\beta} \| \delta \theta_p^1 \|^2 + \frac{\tau}{2} \| \theta_p^1 \|_B^2 \\
	& \leq \frac{\tau}{2} \| \theta_p^0 \|_B^2 + \frac{\tau^2}{4 \beta} \| R_p^1 \|^2 + \frac{1}{\beta} \| \delta \theta_p^1 \|^2 + \frac{\tau^2}{2\eta^2} \| R_{\bm{u}}^1 \|_A^2 + \frac{\eta^2}{2} \frac{\alpha^2}{\lambda + 2\mu/d} \| \delta \theta_p^1 \|^2. 
\end{align*}
Therefore, using that $L = \omega \frac{\alpha^2}{\lambda + 2\mu /d}$, $\omega \geq 1$, 
\begin{align*}
\frac{\eta^2}{2\omega} \frac{L}{2} \| \delta \theta_p^1 \|^2 +  \frac{\tau}{2} \| \theta_p^1 \|_B^2 \leq \frac{\tau}{2} \| \theta_p^0 \|_B^2 + \frac{\tau^2}{4 \beta} \| R_p^1 \|^2 + \frac{\tau^2}{2\eta^2} \| R_{\bm{u}}^1 \|_A^2.
\end{align*}
which leads to inequality \eqref{ine:error_step_1}.
  \end{proof}
  \begin{lemma} \label{est_time_2}
  The following estimates hold for $j=0, 1, \ldots, N$,
  \begin{align}
  	\tau^4 \| d_{tt} \bar{p}_h^j \|^2 \leq 4 \tau^3 \int_{t_{j-1}}^{t_{j+1}} \| \partial_{tt} p(s) \|^2 \, \mathrm{d}s + 2 \tau \int_{t_{j-1}}^{t_{j+1}} \| \partial_t \rho_p(s) \|^2 \, \mathrm{d}s, \label{ine:bound_dtt_p} \\
		\tau^4 \| d_{tt} \bar{\bm{u}}_h^j \|^2_A \leq 4 \tau^3 \int_{t_{j-1}}^{t_{j+1}} \| \partial_{tt} \bm{u}(s) \|_A^2 \, \mathrm{d}s + 2 \tau \int_{t_{j-1}}^{t_{j+1}} \| \partial_t \rho_{\bm{u}}(s) \|_A^2 \, \mathrm{d}s.  \label{ine:bound_dtt_u}
  \end{align}
  \end{lemma}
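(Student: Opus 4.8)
Both bounds are instances of Taylor expansion in integral form together with the Cauchy--Schwarz inequality, applied to the elliptic projections. The plan is to prove \eqref{ine:bound_dtt_p} in detail and then obtain \eqref{ine:bound_dtt_u} by running the identical argument in the $\|\cdot\|_A$-norm.

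The first step is to rewrite the second-order difference quotient as a genuine second difference: by the definitions of $d_t$ and $d_{tt}$ one has $\tau^2 d_{tt}\bar{p}_h^j = \bar{p}_h(t_{j+1}) - 2\bar{p}_h(t_j) + \bar{p}_h(t_{j-1})$. Using $\rho_p = p - \bar{p}_h$, I would split the right-hand side into the contribution of the exact solution, $p(t_{j+1}) - 2p(t_j) + p(t_{j-1})$, minus the contribution of the projection error, $\rho_p(t_{j+1}) - 2\rho_p(t_j) + \rho_p(t_{j-1})$, estimate the two separately, and recombine via $\|a-b\|^2 \le 2\|a\|^2 + 2\|b\|^2$. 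For the exact-solution contribution, the integral form of Taylor's theorem gives the standard representation $p(t_{j+1}) - 2p(t_j) + p(t_{j-1}) = \int_{t_{j-1}}^{t_{j+1}}\phi_j(s)\,\partial_{tt}p(s)\,\mathrm{d}s$, where $\phi_j$ is the piecewise-linear ``hat'' weight supported on $(t_{j-1},t_{j+1})$ with $0 \le \phi_j \le \tau$; then bounding the $L^2(\Omega)$-norm of the integral by the integral of the $L^2(\Omega)$-norms, estimating $\phi_j$ by $\tau$, and applying Cauchy--Schwarz in $s$ over the interval of length $2\tau$ yields a bound of the form $c\,\tau^{3}\int_{t_{j-1}}^{t_{j+1}}\|\partial_{tt}p(s)\|^2\,\mathrm{d}s$. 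For the projection-error contribution, I would instead write the second difference as the difference of the two first differences and use $\rho_p(t_{j+1}) - \rho_p(t_j) = \int_{t_j}^{t_{j+1}}\partial_t\rho_p(s)\,\mathrm{d}s$, $\rho_p(t_j) - \rho_p(t_{j-1}) = \int_{t_{j-1}}^{t_j}\partial_t\rho_p(s)\,\mathrm{d}s$, followed by the triangle and Cauchy--Schwarz inequalities, obtaining a bound of the form $c\,\tau\int_{t_{j-1}}^{t_{j+1}}\|\partial_t\rho_p(s)\|^2\,\mathrm{d}s$. Adding the two contributions and keeping track of the constants gives \eqref{ine:bound_dtt_p}.

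For \eqref{ine:bound_dtt_u} the argument is the same with $\bm{u}$ in place of $p$ and $\|\cdot\|_A$ in place of $\|\cdot\|$; since $\|\cdot\|_A$ is a genuine norm on $\bm{V}_h$, the hat-weight Taylor representation and both inequality steps carry over unchanged. The only extra point is that $\partial_{tt}\bar{\bm{u}}_h$ and $\partial_t\rho_{\bm{u}}$ are meaningful objects: because $a(\cdot,\cdot)$, $b(\cdot,\cdot)$ and $\alpha$ do not depend on $t$, differentiating the defining relations \eqref{eqn:elliptic-proj-u}--\eqref{eqn:elliptic-proj-p} in time shows that $(\partial_t\bar{\bm{u}}_h,\partial_t\bar{p}_h)$, and likewise its second time derivative, is exactly the elliptic projection of $(\partial_t\bm{u},\partial_t p)$; hence $\rho_{\bm{u}}$ and $\rho_p$ inherit the $\mathcal{C}^1$-in-time regularity of $\bm{u}$ and $p$, and the fundamental theorem of calculus applies as above.

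I do not expect a genuine obstacle: this is essentially a bookkeeping lemma. The two points that call for a little care are (i) verifying that the elliptic projections commute with $\partial_t$, so that $\partial_{tt}\bar{p}_h$, $\partial_{tt}\bar{\bm{u}}_h$, $\partial_t\rho_p$, $\partial_t\rho_{\bm{u}}$ are well defined (the right-hand sides already presuppose $\partial_{tt}p,\partial_{tt}\bm{u}\in L^2$ in time, a standing regularity assumption), and (ii) tracking the hat weight $\phi_j$ carefully enough to land the correct power $\tau^3$ and the displayed constants.
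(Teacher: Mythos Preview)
Your proposal is correct and relies on the same ingredients as the paper---integral Taylor remainders plus Cauchy--Schwarz---but is organized a little differently. The paper does not use the hat-weight representation of the second difference; instead it writes $\tau\, d_{tt}\bar p_h^{\,j}=d_t\bar p_h^{\,j+1}-d_t\bar p_h^{\,j}$, adds and subtracts $\partial_t p(t_{j+1})$ and $\partial_t p(t_j)$, and thereby produces three $\partial_{tt}p$-integrals (two with linear weights $(s-t_j)$, $(s-t_{j-1})$ and one unweighted over $[t_j,t_{j+1}]$) together with the same two $\partial_t\rho_p$-integrals you obtain. Bounding all weights by $\tau$ then yields $\tau\,\|d_{tt}\bar p_h^{\,j}\|\le 2\int_{t_{j-1}}^{t_{j+1}}\|\partial_{tt}p\|\,\mathrm{d}s+\tau^{-1}\int_{t_{j-1}}^{t_{j+1}}\|\partial_t\rho_p\|\,\mathrm{d}s$, from which the stated bound follows. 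Your single hat-weighted integral for the $p$-part is cleaner and, if you keep $\int\phi_j^2=2\tau^3/3$ rather than bounding $\phi_j\le\tau$, even gives a sharper constant on the $\partial_{tt}p$ term; the $\rho_p$ estimate is identical in both arguments. Since the lemma is only invoked later through a generic constant $C$, the precise constants are immaterial.
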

  \begin{proof}
We prove the estimate corresponding to $d_{tt}\bar{p}_h^j$, and a similar argument applies to the estimate for $d_{tt} \bar{\bm{u}}_h^j$.  Notice that
\begin{align*}
	\tau d_{tt} \bar{p}_h^j &= d_t \bar{p}_h^{j+1} - d_t  \bar{p}_h^{j} \\
	& = \left(  \partial_t p(t_{j+1}) - \frac{p(t_{j+1}) - p(t_j)}{\tau}  \right) +   \left(   \frac{\rho_p(t_{j+1}) - \rho_p(t_j)}{\tau} \right) - \partial_t p(t_{j+1}) \\
	& \quad  - \left(  \partial_t p(t_{j}) - \frac{p(t_{j}) - p(t_{j-1})}{\tau}  \right) -   \left(   \frac{\rho_p(t_{j}) - \rho_p(t_{j-1})}{\tau} \right) + \partial_t p(t_{j}) \\
	& = \frac{1}{\tau} \int_{t_j}^{t_{j+1}} (s - t_j) \partial_{tt} p(s) \, \mathrm{d}s  + \frac{1}{\tau} \int_{t_j}^{t_{j+1}} \partial_t \rho_p(s) \, \mathrm{d}s \\
	& \quad - \frac{1}{\tau} \int_{t_{j-1}}^{t_{j}} (s - t_{j-1}) \partial_{tt} p(s) \, \mathrm{d}s  - \frac{1}{\tau} \int_{t_{j-1}}^{t_{j}} \partial_t \rho_p(s) \, \mathrm{d}s - \int_{t_{j}}^{t_{j+1}} \partial_{tt} p(s) \, \mathrm{d}s.
\end{align*}
Therefore, we have
\begin{align*}
	\tau \| d_{tt} \bar{p}_h^j \| \leq 2 \int_{t_{j-1}}^{t_{j+1}} \| \partial_{tt} p(s) \| \, \mathrm{d}s + \frac{1}{\tau} \int_{t_{j-1}}^{t_{j+1}} \| \partial_t \rho_p(s) \| \, \mathrm{d}s
\end{align*}
and then inequality \eqref{ine:bound_dtt_p} yields.
 \end{proof}
 
We are now ready to present the error estimates.
  \begin{thm} \label{thm:convergence-alg2}
 Let $\bm{u}(t)$ and $p(t)$ be the solutions of \eqref{variational1} and \eqref{variational2}, and let $\bm{u}_h^n$ and $p_h^n$ be the solutions obtained by \cref{alg:iterative_stable}. If $L = \omega \frac{\alpha^2}{\lambda + 2\mu/d}$, $\omega \geq 1$, then the following error estimates hold,
 \begin{eqnarray}
&& \qquad \| \bm{u}(t_n) - \bm{u}_h^n \|_A + \| p(t_n) - p_h^n \|_B +  \| p(t_n) - p_h^n \|  \label{ine:alg2_error_u} \\
& &\quad  \leq C \left( \| e_{\bm{u}}^0 \|_A + \| e_p^0 \|_B  \right)  +  C h^s\left( |u(t_0)|_{k+1} + |p(t_0)|_{\ell+1}  + |u(t_n)|_{k+1} + |p(t_n)|_{\ell+1} \right)  \nonumber \\
&&  \qquad  + C \tau  \left[  \left( \int_{t_0}^{t_{n+1}} \| \partial_{tt} p(s) \|^2 \, \mathrm{d}s \right)^{\frac{1}{2}} +  \left( \int_{t_0}^{t_{n+1}} \| \partial_{tt} \bm{u}(s) \|_1^2 \, \mathrm{d}s \right)^{\frac{1}{2}}   \right] \nonumber \\
&& \qquad + Ch^s \left[  \left( \int_{t_0}^{t_{n+1}} | \partial_t p(s) |_{\ell+1}^2 \, \mathrm{d}s \right)^{\frac{1}{2}} + \left( \int_{t_0}^{t_{n+1}} |\partial_t \bm{u}(s)|^2_{k+1} + |\partial_t p(s)|_{\ell+1}^2 \, \mathrm{d}s \right)^{\frac{1}{2}} \right], \nonumber 
 \end{eqnarray}
 where $s = \min(k, \ell)$.
 \end{thm}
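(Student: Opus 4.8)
The plan is to split the errors as $e_{\bm u}^j=\rho_{\bm u}(t_j)-\theta_{\bm u}^j$, $e_p^j=\rho_p(t_j)-\theta_p^j$, bound the projection parts $\rho$ by \eqref{ine:err-rho-u-A}--\eqref{ine:err-rho-p-B} (and their $\partial_t$-analogues), and establish an energy estimate for the discrete parts $\theta_{\bm u}^j,\theta_p^j$. First I would derive the error equations: subtracting \cref{alg:iterative_stable} from \eqref{variational1}--\eqref{variational2} evaluated at $t_{j+1}$, invoking the elliptic-projection identities \eqref{eqn:elliptic-proj-u}--\eqref{eqn:elliptic-proj-p}, inserting $d_tp_h^{j+1}=d_t\bar p_h(t_{j+1})+d_t\theta_p^{j+1}$, $d_t\bm u_h^j=d_t\bar{\bm u}_h(t_j)+d_t\theta_{\bm u}^j$, and writing $\partial_t\bm u(t_{j+1})-d_t\bm u_h^j=(\partial_t\bm u(t_{j+1})-\partial_t\bm u(t_j))+R_{\bm u}^j-d_t\theta_{\bm u}^j$, one obtains, for all $q_h\in Q_h$,
\[
\frac1\beta(d_t\theta_p^{j+1},q_h)+\alpha(\ddiv d_t\theta_{\bm u}^j,q_h)+b(\theta_p^{j+1},q_h)+L\tau(d_{tt}\theta_p^j,q_h)=\mathcal R^{j+1}(q_h),
\]
with $\mathcal R^{j+1}(q_h)=\tfrac1\beta(R_p^{j+1},q_h)+\alpha\big(\ddiv(\partial_t\bm u(t_{j+1})-\partial_t\bm u(t_j)+R_{\bm u}^j),q_h\big)-L\tau(d_{tt}\bar p_h^j,q_h)$; here I assume, as is standard, that the discrete sources reproduce the exact ones. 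Rescaling \eqref{eqn:delta_theta_u_j2} gives $a(d_t\theta_{\bm u}^{j+1},\bm v_h)=\alpha(d_t\theta_p^{j+1},\ddiv\bm v_h)$, and differencing it once more gives $a(d_{tt}\theta_{\bm u}^j,\bm v_h)=\alpha(d_{tt}\theta_p^j,\ddiv\bm v_h)$.

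Next I would test the pressure error equation with $q_h=d_t\theta_p^{j+1}$. The coupling term rewrites, using the rescaled displacement identity with $\bm v_h=d_t\theta_{\bm u}^j$, as $\alpha(\ddiv d_t\theta_{\bm u}^j,d_t\theta_p^{j+1})=a(d_t\theta_{\bm u}^{j+1},d_t\theta_{\bm u}^j)$; polarization turns this into $\tfrac12\|d_t\theta_{\bm u}^{j+1}\|_A^2+\tfrac12\|d_t\theta_{\bm u}^j\|_A^2-\tfrac{\tau^2}{2}\|d_{tt}\theta_{\bm u}^j\|_A^2$. For the stabilization term, $(x-y,x)=\tfrac12\|x\|^2-\tfrac12\|y\|^2+\tfrac12\|x-y\|^2$ with $x=d_t\theta_p^{j+1}$, $y=d_t\theta_p^j$ yields $L\tau(d_{tt}\theta_p^j,d_t\theta_p^{j+1})=\tfrac L2\|d_t\theta_p^{j+1}\|^2-\tfrac L2\|d_t\theta_p^j\|^2+\tfrac{L\tau^2}{2}\|d_{tt}\theta_p^j\|^2$. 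The \emph{crucial observation} is that the only negative contribution, $-\tfrac{\tau^2}{2}\|d_{tt}\theta_{\bm u}^j\|_A^2$, is controlled by the positive remainder $\tfrac{L\tau^2}{2}\|d_{tt}\theta_p^j\|^2$: applying \cref{est_time_0} to the second difference (i.e.\ using $a(d_{tt}\theta_{\bm u}^j,\bm v_h)=\alpha(d_{tt}\theta_p^j,\ddiv\bm v_h)$ together with $a(\bm v,\bm v)\ge(\lambda+2\mu/d)\|\ddiv\bm v\|^2$) gives $\|d_{tt}\theta_{\bm u}^j\|_A^2\le\frac{\alpha^2}{\lambda+2\mu/d}\|d_{tt}\theta_p^j\|^2=\frac L\omega\|d_{tt}\theta_p^j\|^2$, so $\tfrac{L\tau^2}{2}\|d_{tt}\theta_p^j\|^2-\tfrac{\tau^2}{2}\|d_{tt}\theta_{\bm u}^j\|_A^2\ge\tfrac{L\tau^2}{2}(1-\tfrac1\omega)\|d_{tt}\theta_p^j\|^2\ge0$ precisely because $\omega\ge1$. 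This is exactly where the hypothesis $L=\omega\alpha^2/(\lambda+2\mu/d)$, $\omega\ge1$, enters, and it is the conceptual heart of the proof.

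After multiplying by $\tau$ and summing over $j=1,\dots,n-1$, every displacement term is non-negative or discarded, the $b$-term telescopes to $\tfrac12(\|\theta_p^n\|_B^2-\|\theta_p^1\|_B^2)$, and the leading part of the $L$-term telescopes to $\tfrac{L\tau}{2}(\|d_t\theta_p^n\|^2-\|d_t\theta_p^1\|^2)$, leaving
\[
\tfrac12\|\theta_p^n\|_B^2+\tfrac{L\tau}{2}\|d_t\theta_p^n\|^2+\tfrac{\tau}{\beta}\sum_{j=1}^{n-1}\|d_t\theta_p^{j+1}\|^2\le\tfrac12\|\theta_p^1\|_B^2+\tfrac{L\tau}{2}\|d_t\theta_p^1\|^2+\tau\sum_{j=1}^{n-1}\mathcal R^{j+1}(d_t\theta_p^{j+1}).
\]
Each term of the residual sum is handled by Cauchy--Schwarz and Young's inequality, absorbing the factor $\|d_t\theta_p^{j+1}\|$ into the $\tfrac1\beta$-term; $R_p^{j+1}$, $R_{\bm u}^j$ and the increment $\partial_t\bm u(t_{j+1})-\partial_t\bm u(t_j)$ are bounded by Taylor's theorem with integral remainder and the projection estimates for $\partial_t\rho$, producing the $\tau(\int\|\partial_{tt}p\|^2)^{1/2}$, $\tau(\int\|\partial_{tt}\bm u\|_1^2)^{1/2}$ and $h^s(\cdots)$ terms of \eqref{ine:alg2_error_u}, while the $d_{tt}\bar p_h^j$ term is bounded by \cref{est_time_2}. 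No discrete Gr\"onwall is needed, since no undifferenced $\theta_p$-term survives on the right. Finally, $\|\theta_p^1\|_B$ and $\|d_t\theta_p^1\|$ are controlled by \cref{est_time_1} (dividing \eqref{ine:error_step_1} by $\tau$, which loses no power of $\tau$), and $\|\theta_p^0\|_B\le\|e_p^0\|_B+\|\rho_p(t_0)\|_B$ produces the initial-data and $h^s|p(t_0)|_{\ell+1}$ contributions.

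It remains to recover the three norms appearing on the left of \eqref{ine:alg2_error_u}: $\|\theta_p^n\|_B$ comes straight from the energy inequality; for the $L^2$-norm I would use $\|\theta_p^n\|\le\|\theta_p^0\|+\tau\sum_{j=1}^n\|d_t\theta_p^j\|\le\|\theta_p^0\|+\sqrt{t_n}\,(\tau\sum_j\|d_t\theta_p^j\|^2)^{1/2}$, the last factor being controlled by the $\tfrac\tau\beta\sum\|d_t\theta_p^{j+1}\|^2$-term (plus $\|d_t\theta_p^1\|$ from \cref{est_time_1}); and then $\|\theta_{\bm u}^n\|_A\le\frac{\alpha}{\sqrt{\lambda+2\mu/d}}\|\theta_p^n\|$ by testing the stationary identity $a(\theta_{\bm u}^n,\bm v_h)=\alpha(\theta_p^n,\ddiv\bm v_h)$ (cf.\ \eqref{eqn:theta_u_j+2}) with $\bm v_h=\theta_{\bm u}^n$ and using coercivity. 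Adding $\|\rho_{\bm u}(t_n)\|_A$, $\|\rho_p(t_n)\|_B$, $\|\rho_p(t_n)\|$ via the triangle inequality $e=\rho-\theta$ and \eqref{ine:err-rho-u-A}--\eqref{ine:err-rho-p-B} completes the proof. The main obstacle is the middle step: unlike for the implicit or iteratively-coupled fixed-stress schemes, the time lag in the displacement term produces a genuinely destabilizing contribution, and one must show it is exactly absorbed by the forward-Euler part of the $L$-stabilization; keeping every constant independent of $\lambda$ along the way (so the estimate is robust as $\lambda\to\infty$) is what forces the particular scaling of $L$ and the inf-sup normalization \eqref{ine_inf-sup}.
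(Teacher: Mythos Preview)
Your overall strategy---derive the error equations, test the pressure equation with $\delta\theta_p^{j+1}$, rewrite the coupling term via $a(\delta\theta_{\bm u}^{j+1},\delta\theta_{\bm u}^j)$, polarize, and exploit the cancellation $\tfrac{L}{2}\|\delta\theta_p^{j+1}-\delta\theta_p^j\|^2-\tfrac12\|\delta\theta_{\bm u}^{j+1}-\delta\theta_{\bm u}^j\|_A^2\ge0$ coming from $\omega\ge1$---is exactly what the paper does. A small mislabel: the inequality $\|d_{tt}\theta_{\bm u}^j\|_A^2\le\frac{\alpha^2}{\lambda+2\mu/d}\|d_{tt}\theta_p^j\|^2$ you derive is the \emph{coercivity} bound (the paper's \eqref{ine:u_normA_bound_by_p_norm}), not \cref{est_time_0}; the latter is the inf--sup lower bound and gives the reverse inequality.

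Where you genuinely diverge from the paper is in the absorption of the residual and the recovery of the final norms. You discard the non-negative terms $\tfrac12\|d_t\theta_{\bm u}^{j+1}\|_A^2+\tfrac12\|d_t\theta_{\bm u}^j\|_A^2$ and then absorb every $\|d_t\theta_p^{j+1}\|$ factor from $\mathcal R^{j+1}$ into the $\tfrac\tau\beta\sum\|d_t\theta_p^{j+1}\|^2$ term; likewise you control $\|\theta_p^n\|$ through that same sum. This works when $1/\beta>0$, but the resulting constant $C$ carries powers of $\beta$ and the argument breaks down outright in the limit $1/\beta=0$ (the incompressible case, which the paper wants to cover---see their Barry--Mercer test). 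The paper instead \emph{keeps} $\tfrac12\|\delta\theta_{\bm u}^{j+1}\|_A^2$ and converts it, via the inf--sup \cref{est_time_0}, into $\tfrac{\eta^2L}{2\omega}\|\delta\theta_p^{j+1}\|^2$; that term is then the absorption slot for $I$, giving a bound independent of $\beta$. For the final norms the paper accumulates $\|\theta_{\bm u}^n\|_A\le\sum_j\|\delta\theta_{\bm u}^j\|_A+\|\theta_{\bm u}^0\|_A$ and controls $\sum_j\|\delta\theta_{\bm u}^j\|_A^2$ directly from the energy inequality, while $\|\theta_p^n\|$ comes from Poincar\'e applied to $\|\theta_p^n\|_B$. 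Your alternative route via $\|\theta_{\bm u}^n\|_A\le\sqrt{L/\omega}\,\|\theta_p^n\|$ is neat and correct, but again it ultimately rests on the $\tfrac\tau\beta$ sum. So your proof yields the theorem for any fixed finite $\beta$, but if you want the $\beta$-robustness claimed in the paper's Remark, you need to bring the inf--sup into the main energy step rather than discarding the displacement increments.
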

  \begin{proof}
   Based on \eqref{eqn:elliptic-proj-u} and \eqref{eqn:elliptic-proj-p}, from \eqref{variational2} and \eqref{pressure},  for $j\geq 1$, we have
 \begin{eqnarray}
 & \quad   \frac{1}{\beta} (\delta \theta_{p}^{j+1}, q_h) + \alpha (\nabla \cdot \delta \theta_{\bm{u}}^j, q_h) 
  + \tau \, b(\theta_p^{j+1}, q_h) + L (\delta \theta_p^{j+1} - \delta \theta_p^{j}, q_h) \label{eqn:theta_p_j+1} \\
 &   = \frac{\tau}{\beta} (R_p^{j+1}, q_h) + \tau \alpha(\nabla \cdot R_{\bm{u}}^{j+1}, q_h) - \tau^2 L(d_{tt} \bar{p}_h^j, q_h)  + \tau^2 \alpha (\nabla \cdot d_{tt} \bar{\bm{u}}_h^j, q_h), \ \forall \, q_h \in Q_h,  \nonumber
 \end{eqnarray}
 Now let $\bm{v}_h = \delta \theta_{\bm{u}}^{j}$ in \eqref{eqn:delta_theta_u_j2} and $q_h = \delta \theta_p^{j+1}$ in \eqref{eqn:theta_p_j+1}, and then adding the two equations, we have
 \begin{eqnarray}
 && \quad \qquad  a( \delta \theta_{\bm{u}}^{j+1}, \delta \theta_{\bm{u}}^{j}) + \frac{1}{\beta} \| \delta \theta_p^{j+1} \|^2 + L (\delta \theta_p^{j+1}, \delta \theta_p^{j+1} - \delta \theta_p^{j}) + \tau b(\theta_p^{j+1}, \theta_p^{j+1} - \theta_p^{j})  \label{def_of_I} \\
 && =   \frac{\tau}{\beta} (R_p^{j+1}, \delta \theta_p^{j+1}) + \tau \alpha(\nabla \cdot R_{\bm{u}}^{j+1}, \delta \theta_p^{j+1}) - \tau^2 L(d_{tt} \bar{p}_h^j, \delta \theta_p^{j+1}) + \tau^2 \alpha (\nabla \cdot d_{tt} \bar{\bm{u}}_h^j, \delta \theta_p^{j+1}) \nonumber \\
 && =:I.  \nonumber
 \end{eqnarray}
 Using the identity $a(a-b) = \frac{1}{2} (a^2-b^2 + (a-b)^2 )$ and a polarization identity, we have
 \begin{align*}
 & \quad 	\frac{1}{2} \| \delta \theta_{\bm{u}}^{j+1} \|_A^2 + \frac{1}{2} \| \delta \theta_{\bm{u}}^{j} \|_A^2 + \frac{1}{\beta} \| \delta \theta_p^{j+1} \|^2 + \frac{L}{2} \|\delta \theta_p^{j+1}\|^2 + \frac{\tau}{2} \|\theta_p^{j+1}\|_B^2 + \frac{\tau}{2} \| \delta \theta_p^{j+1} \|_B^2  \\
 & + \frac{L}{2} \| \delta \theta_p^{j+1} - \delta \theta_p^j \|^2 = \frac{L}{2} \|  \delta \theta_p^j \|^2 + \frac{\tau}{2} \| \theta_p^j \|_B^2  +  \frac{1}{2} \| \delta \theta_{\bm{u}}^{j+1}  - \delta \theta_{\bm{u}}^{j} \|_A^2+ I.
 \end{align*}
Considering \eqref{eqn:delta_theta_u_j2} for the difference of two consecutive time steps, $t= t_{j+1}$ and $t= t_{j}$,  and testing with $\bm{v}_h = \delta\theta_{\bm{u}}^{j+1} - \delta \theta_{\bm{u}}^j$, we obtain 
\begin{align*}
& \quad \| \delta\theta_{\bm{u}}^{j+1} - \delta \theta_{\bm{u}}^j \|_A^2 \\
&= \alpha (\delta \theta_p^{j+1} - \delta \theta_p^j, \nabla \cdot (\delta\theta_{\bm{u}}^{j+1} - \delta \theta_{\bm{u}}^j))  \leq \frac{\alpha}{\sqrt{\lambda+2\mu/d}} \| \delta \theta_p^{j+1} - \delta \theta_p^j \| \| \delta\theta_{\bm{u}}^{j+1} - \delta \theta_{\bm{u}}^j \|_A,
\end{align*}
where we used the Cauchy-Schwarz inequality and the fact that $a(\bm{u}, \bm{u}) \geq (\lambda + 2\mu/d) \| \nabla \cdot \bm{u} \|^2$.  Since $L = \omega \frac{\alpha^2}{\lambda + 2\mu /d}$, we have
\begin{align} \label{ine:u_normA_bound_by_p_norm}
	\| \delta\theta_{\bm{u}}^{j+1} - \delta \theta_{\bm{u}}^j \|_A \leq \frac{\alpha}{\sqrt{\lambda+2\mu/d}} \| \delta \theta_p^{j+1} - \delta \theta_p^j \| = \sqrt{\frac{L}{\omega}} \| \delta \theta_p^{j+1} - \delta \theta_p^j \| .
\end{align}
By \eqref{ine:u_normA_bound_by_p_norm} and \cref{est_time_0}, we arrive at
\begin{align*}
& \quad \frac{\eta^2}{2} \frac{\alpha^2}{\lambda + 2\mu/d} \| \delta \theta_p^{j+1} \|^2 + \frac{1}{2} \| \delta \theta_{\bm{u}}^j \|_A^2 + \frac{1}{\beta} \| \delta \theta_p^{j+1} \|^2  + \frac{L}{2} \|\delta \theta_p^{j+1}\|^2 + \frac{\tau}{2} \|\theta_p^{j+1}\|_B^2  \\
& + \frac{\tau}{2} \|  \delta \theta_p^{j+1}  \|_B^2  + \frac{L}{2} \| \delta \theta_p^{j+1} - \delta \theta_p^j \|^2 \leq \frac{L}{2} \|  \delta \theta_p^j \|^2 + \frac{\tau}{2} \| \theta_p^j \|_B^2 + \frac{L}{2\omega} \| \delta\theta_{p}^{j+1} - \delta \theta_{p}^{j} \|_A^2  + I. 
\end{align*}
Since we assume $\omega \geq 1$, we obtain
\begin{eqnarray}
	& &  \  \frac{\eta^2 L}{2\omega }\| \delta \theta_p^{j+1} \|^2 + \frac{1}{2} \| \delta \theta_{\bm{u}}^j \|_A^2 + \frac{1}{\beta} \| \delta \theta_p^{j+1} \|^2 + \frac{L}{2} \|\delta \theta_p^{j+1}\|^2 \label{ine:error_before_I}    \\
	& &+ \frac{\tau}{2} \|\theta_p^{j+1}\|_B^2 + \frac{\tau}{2} \|  \delta \theta_p^{j+1}  \|_B^2   \leq \frac{L}{2} \|  \delta \theta_p^j \|^2 + \frac{\tau}{2} \| \theta_p^j \|_B^2  + I.   \nonumber
\end{eqnarray}
To estimate $I$ defined in \eqref{def_of_I}, by applying the Young's inequality, 
we obtain
\begin{align*}
	\lvert I \rvert 
	& \leq \frac{\tau^2}{4\beta} \| R_{p}^{j+1} \|^2 + \frac{1}{\beta}  \| \delta \theta_p^{j+1} \|^2 + \frac{3\tau^2}{2\eta^2} \| R_{\bm{u}}^{j+1} \|_A^2 +   \frac{\eta^2}{6 \omega} L \|\delta \theta_p^{j+1} \|^2      \\
	& \quad + \frac{3 \tau^4 \omega}{ 2\eta^2} L \| d_{tt} \bar{p}_h^j \|^2 + \frac{\eta^2}{6 \omega} L \|\delta \theta_p^{j+1} \|^2  + \frac{3 \tau^4}{ 2\eta^2} \| d_{tt} \bar{\bm{u}}_h^j \|_A^2 + \frac{\eta^2}{6 \omega} L \|\delta \theta_p^{j+1} \|^2 \\
	& = \frac{\eta^2}{ 2\omega} L \|\delta \theta_p^{j+1} \|^2  + \frac{1}{\beta}  \| \delta \theta_p^{j+1} \|^2 + \frac{\tau^2}{4\beta} \| R_{p}^{j+1} \|^2 + \frac{3\tau^2}{2\eta^2} \| R_{\bm{u}}^{j+1} \|_A^2 \\
	& \quad + \frac{3 \tau^4 \omega}{2\eta^2} L \| d_{tt} \bar{p}_h^j \|^2  +  \frac{3 \tau^4}{ 2\eta^2} \| d_{tt} \bar{\bm{u}}_h^j \|_A^2. 
\end{align*}
Plug back in \eqref{ine:error_before_I} and dropping the term $\frac{\tau}{2} \| \delta \theta_p^{j+1} \|_B^2$, we obtain
 \begin{align*}
   & \quad \frac{1}{2} \| \delta \theta_{\bm{u}}^j \|_A^2 +  \frac{L}{2} \|\delta \theta_p^{j+1}\|^2 + \frac{\tau}{2} \|\theta_p^{j+1}\|_B^2 
     \leq  \frac{L}{2} \|  \delta \theta_p^j \|^2 + \frac{\tau}{2} \| \theta_p^j \|_B^2   + H^{j+1},
 \end{align*}
where
 $ \displaystyle 
 H^{j+1} =  \frac{\tau^2}{4\beta} \| R_{p}^{j+1} \|^2 + \frac{3\tau^2}{2\eta^2} \| R_{\bm{u}}^{j+1} \|_A^2 + \frac{3 \tau^4 \omega}{2\eta^2} L \| d_{tt} \bar{p}_h^j \|^2  +  \frac{3 \tau^4}{ 2\eta^2} \| d_{tt} \bar{\bm{u}}_h^j \|_A^2.
 $
  Summing the above inequality from $j=1$ to $j=n$, we have
 \begin{align*}
 \frac{1}{2} \sum_{j=1}^{n}  \| \delta \theta_{\bm{u}}^j \|_A^2 +  \frac{L}{2} \|\delta \theta_p^{n+1}\|^2 + \frac{\tau}{2} \|\theta_p^{n+1}\|_B^2 \leq \frac{L}{2} \|\delta \theta_p^{1}\|^2 + \frac{\tau}{2} \|\theta_p^{1}\|_B^2 + \sum_{j=1}^n H^{j+1}.
 \end{align*}
 Next, we estimate $H^{j+1}$.  For the first two terms involving $R_p^{j+1}$ and $R_{\bm{u}}^{j+1}$, following \cite[Lemma 8]{2016RodrigoGasparHuZikatanov-a}, we have
\begin{align}
	\tau^2 \| R_p^{j+1} \|^2 &\leq 2 \left( \tau^3 \int_{t_j}^{t_{j+1}} \| \partial_{tt} p(s) \|^2 \, \mathrm{d}s  + \tau \int_{t_j}^{t_{j+1}} \| \partial_t \rho_p(s) \|^2 \, \mathrm{d}s \right),  \label{ine:bound_R_p} \\
	\tau^2 \| R_{\bm{u}}^{j+1} \|_A^2 &\leq 2 \left( \tau^3 \int_{t_j}^{t_{j+1}} \| \partial_{tt} \bm{u}(s) \|^2_A \, \mathrm{d}s  +  \tau \int_{t_j}^{t_{j+1}} \| \partial_t \rho_{\bm{u}}(s) \|^2_A \, \mathrm{d}s \right). \label{ine:bound_R_u}
\end{align}
For the last two terms involving $d_{tt} \bar{p}_h^j$ and $d_{tt} \bar{\bm{u}}_h^j$, we use \cref{est_time_2}. Thus, 
\begin{align*}
	 & \quad  \frac{1}{2} \sum_{j=1}^{n}  \| \delta \theta_{\bm{u}}^j \|_A^2 + \frac{L}{2} \|\delta \theta_p^{n+1}\|^2 + \frac{\tau}{2} \|\theta_p^{n+1}\|_B^2 \leq \frac{L}{2} \|\delta \theta_p^{1}\|^2 + \frac{\tau}{2} \|\theta_p^{1}\|_B^2  \nonumber\\
	 & + C \left[ \tau^3 \left( \frac{1}{\beta} + \frac{\omega L}{\eta^2}  \right) \int_{t_0}^{t_{n+1}} \| \partial_{tt}p(s) \|^2 \, \mathrm{d}s + \tau \left( \frac{1}{\beta} + \frac{\omega L}{\eta^2} \right) \int_{t_0}^{t_{n+1}} \| \partial_t \rho_p(s)\|^2 \, \mathrm{d}s \right. \nonumber \\
	 & \left. \qquad \quad \ + \frac{\tau^3}{\eta^2} \int_{t_0}^{t_{n+1}} \| \partial_{tt} \bm{u}(s) \|_A^2 \, \mathrm{d}s +  \frac{\tau}{\eta^2} \int_{t_0}^{t_{n+1}} \| \partial \rho_{\bm{u}}(s) \|_A^2 \, \mathrm{d}s \right]. \nonumber
\end{align*}
By \cref{est_time_1}  and the fact that $\theta_p^0 = \rho_p(t_0) - e_p^0$,  we have
\begin{eqnarray}
&& \qquad   \sum_{j=1}^{n}  \| \delta \theta_{\bm{u}}^j \|_A^2 + L \|\delta \theta_p^{n+1}\|^2 + \tau \|\theta_p^{n+1}\|_B^2  \leq C\max\{ \frac{2\omega}{\eta^2},1 \}  \frac{\tau}{2} \left( \| e_p^0 \|_B^2 + \| \rho_p(t_0) \|_B^2 \right) \label{ine:bound-theta_u^A-theta_p^B}  \\
&& \quad  + C \max\{ \frac{2\omega}{\eta^2},1 \}  \left(\frac{\tau^3}{\beta}  \int_{t_0}^{t_1} \| \partial_{tt} p(s) \|^2 \, \mathrm{d}s + \frac{\tau}{\beta} \int_{t_0}^{t_1} \| \partial_t \rho_{p}(s) \|^2 \, \mathrm{d}s   \right. \nonumber \\
&& \qquad \qquad \qquad \qquad \quad  \left. +  \frac{\tau^3}{\eta^2} \int_{t_0}^{t_1} \| \partial_{tt}{\bm{u}}(s) \|_A^2 \, \mathrm{d}s + \frac{\tau}{\eta^2}  \int_{t_0}^{t_1} \| \partial_{t} \rho_{\bm{u}}(s)\|_A^2 \, \mathrm{d}s \right) \nonumber \\
&& \quad + C \left[ \tau^3 \left( \frac{1}{\beta} + \frac{\omega L}{\eta^2}  \right) \int_{t_0}^{t_{n+1}} \| \partial_{tt}p(s) \|^2 \, \mathrm{d}s + \tau \left( \frac{1}{\beta} + \frac{\omega L}{\eta^2} \right) \int_{t_0}^{t_{n+1}} \| \partial_t \rho_p(s)\|^2 \, \mathrm{d}s \right. \nonumber \\
&& \left. \qquad \quad \ + \frac{\tau^3}{\eta^2} \int_{t_0}^{t_{n+1}} \| \partial_{tt} \bm{u}(s) \|_A^2 \, \mathrm{d}s +  \frac{\tau}{\eta^2} \int_{t_0}^{t_{n+1}} \| \partial \rho_{\bm{u}}(s) \|_A^2 \, \mathrm{d}s \right]. \nonumber
\end{eqnarray}
Since $\theta_{\bm{u}}^n = \sum_{j=1}^n \delta \theta_{\bm{u}}^j + \theta_{\bm{u}}^0$, then
\begin{eqnarray} 
&& \ \quad 	\| \theta_{\bm{u}}^n \|_A  \leq  \sum_{j=1}^n \| \delta \theta_{\bm{u}}^j \|_A + \| \theta_{\bm{u}}^0 \|_A \leq \frac{\sqrt{t_n}}{\sqrt{\tau}}   \left( \sum_{j=1}^n \| \delta \theta_{\bm{u}}^j \|_A^2    \right)^{1/2} + \| \theta_{\bm{u}}^0 \|_A \label{ine:bound-theta_u^A} \\
	 && \ \leq \| e_{\bm{u}}^0 \|_A + \| \rho_{\bm{u}}(t_0) \|_A + C\max\{ \frac{\sqrt{2\omega}}{\eta},1 \}  \left( \| e_p^0 \|_B + \| \rho_p(t_0) \|_B \right)  \nonumber  \\
	 && \ \quad  + C \max\{ \frac{\sqrt{2\omega}}{\eta},1 \}  \left[\frac{\tau}{\sqrt{\beta}} \left( \int_{t_0}^{t_1} \| \partial_{tt} p(s) \|^2 \, \mathrm{d}s \right)^{\frac{1}{2}} + \frac{1}{\sqrt{\beta}} \left(\int_{t_0}^{t_1} \| \partial_t \rho_{p}(s) \|^2 \, \mathrm{d}s \right)^{\frac{1}{2}}   \right.   \nonumber\\ 
	 && \ \qquad \qquad \qquad \qquad \quad  \left. +  \frac{\tau}{\eta} \left( \int_{t_0}^{t_1} \| \partial_{tt}{\bm{u}}(s) \|_A^2 \, \mathrm{d}s \right)^{\frac{1}{2}} + \frac{1}{\eta}  \left( \int_{t_0}^{t_1} \| \partial_{t} \rho_{\bm{u}}(s)\|_A^2 \, \mathrm{d}s \right)^{\frac{1}{2}} \right]  \nonumber\\ 
	& & \ \quad + C \left[ \tau \left( \frac{1}{\beta} + \frac{\omega L}{\eta^2}  \right)^{\frac{1}{2}} \left( \int_{t_0}^{t_{n+1}} \| \partial_{tt}p(s) \|^2 \, \mathrm{d}s \right)^{\frac{1}{2}} \right.  \nonumber \\
	 && \ \qquad \quad \ \left. +  \left( \frac{1}{\beta} + \frac{\omega L}{\eta^2} \right)^{\frac{1}{2}} \left( \int_{t_0}^{t_{n+1}} \| \partial_t \rho_p(s)\|^2 \, \mathrm{d}s \right)^{\frac{1}{2}} \right.  \nonumber \\
	 && \ \left. \qquad \quad \ + \frac{\tau}{\eta} \left( \int_{t_0}^{t_{n+1}} \| \partial_{tt} \bm{u}(s) \|_A^2 \, \mathrm{d}s \right)^{\frac{1}{2}} +  \frac{1}{\eta} \left( \int_{t_0}^{t_{n+1}} \| \partial_t \rho_{\bm{u}}(s) \|_A^2 \, \mathrm{d}s \right)^{\frac{1}{2}} \right],    \nonumber
\end{eqnarray}
from which \eqref{ine:alg2_error_u} follows, via the triangle inequality, the error estimates of the projections (\eqref{ine:err-rho-u-A} and \eqref{ine:err-rho-p-B}), the Poincar\'{e} inequality that $\|\theta_p^{n}\| \leq C_p \| \theta_p^{n} \|_B$, and the fact that \eqref{ine:bound-theta_u^A-theta_p^B} holds true when we replace $n$ by $n-1$.
 \end{proof}
 
 \begin{remark}
 	We emphasize that the intermediate error estimate \eqref{ine:bound-theta_u^A}  is para\-me\-ter-robust.  If the projection estimates \eqref{ine:err-rho-u-A} and \eqref{ine:err-rho-p-B} were parameter-robust in the $\| \cdot \|_A$-norm, $\| \cdot \|_B$-norm, and $L^2$-norm, respectively, then the overall error estimate \eqref{ine:alg2_error_u} can be made parameter-robust.
 \end{remark}

\section{P1-P1 Discretization}\label{sec:3}
This section analyzes the convergence of an analogous explicit coupling scheme applied to a stabilized P1-P1 finite element discretization (piecewise linear for both displacements and pressure). Accordingly, we define the displacements and the pressure spaces as follows,
\begin{eqnarray*}
&&{\bf V}_h= \{ {\bm v}_h \in (H_0^1(\Omega))^d \, | \, {\bm v}_h |_T \in (P_1)^d, \; \forall T \in \mathcal{T}_h \}, \\
 &&Q_h = \{ p_h \in H_0^1(\Omega) \, | \, p_h |_T \in P_1, \forall T \in \mathcal{T}_h \}.
\end{eqnarray*}
This pair of finite-element spaces satisfies the following weak inf-sup condition
 \begin{align}\label{ine:inf-sup_with_epsilon}
 	\sup_{\bm{w}_h \in  \bm{V}_h} \frac{(\nabla \cdot \bm{w}_h, q_h)}{ \| \bm{w}_h \|_A} \geq \eta \frac{1}{\sqrt{\lambda + 2\mu/d}} \| q_h \| - \epsilon \frac{1}{\sqrt{\lambda+2\mu/d}} h \| \nabla q_h \|, \quad \forall \, q_h \in Q_h,
  \end{align}
 where $\eta > 0$ and $\epsilon >0$ are constants that do not depend on the mesh size or the physical parameters (see  \cite{2016RodrigoGasparHuZikatanov-a} for details). In order to stabilize this scheme, in \cite{Pe2025}, the authors proposed the following semidiscrete approximation of the problem \eqref{variational1}-\eqref{variational2}: for each $t \in (0,T]$, find $(\bm{u}_h(t), p_h(t)) \in  {\bf V}_h \times Q_h$ such that
 \begin{eqnarray}
  && a(\bm{u}_h(t),\bm{v}_h) -\alpha (p_h(t), \ddiv \bm{v}_h)  
  = (\bm{f},\bm{v}_h ),
     \quad \forall \  \bm{v}_h \in \bm V_h, \label{variational1_semi_2}\\
  &&  \frac{1}{\beta} (\partial_t{p}_h(t),q_h) + L (\partial_t{p}_h(t),q_h)_0 - L (\partial_t{p}_h(t),q_h) +  \alpha(\ddiv {\partial_t{\bm{u}}_h(t)},q_h)  \label{variational2_semi_2}  \\
  && \qquad \qquad + b(p_h(t),q_h)  =  (g,q_h),
   \; \forall \ q_h \in Q_h.\nonumber
\end{eqnarray}
where $(\cdot,\cdot)_0$  is an approximation of the $L_2$ inner product defined by mass lumping.

\subsection{Numerical Scheme}
For the time discretization of problem \eqref{variational1_semi_2}-\eqref{variational2_semi_2}, we again consider a uniform partition of the time interval $(0, T]$, $t_j=j\tau$, $j=0,\ldots, N$, with time-step $\tau=T/N$, and let $(\bm{u}_h^{j},p_h^{j})$ be the approximation of $(\bm{u}_h(t), p_h(t))$ at time level $t_j$.  We now consider  the backward Euler method to discretize the first two terms in \eqref{variational2_semi_2} and the forward Euler method for the third and fourth terms in \eqref{variational2_semi_2}. Notice that, again, the resulting fully discrete problem leads to an explicit coupling approach in which, at each time level, the flow problem is solved first, followed by the mechanics problem. Therefore, there is no coupling between the two problems, as shown in the following algorithm:
\begin{algorithm}[H]
\caption{Explicit coupling algorithm for stabilized P1-P1}  \label{alg:iterative_stable_2}                              
\begin{algorithmic}                                                         
\For{$j=1,2,\ldots,N-1$}
\State {\bf Step 1:} Given $({\bm u}_h^{j},{\bm u}_h^{j-1},p_h^{j},p_h^{j-1}) \in {\bm V}_h \times {\bm V}_h \times Q_h \times Q_h $, find $p_h^{j+1} \in Q_h$
\begin{eqnarray}  
  && \label{pressure_mass_lumping}  \qquad  \quad \   \frac{1}{\beta} \left(\frac{p_h^{j+1}-p_h^{j}}{\tau},q_h\right) + L \left(\frac{p_h^{j+1}-p_h^{j}}{\tau},q_h\right)_0  + b(p_h^{j+1},q_h) \\
  &&  \qquad  =  -\alpha \left(\ddiv \frac{\bm{u}_h^{j}-\bm{u}_h^{j-1}}{\tau},q_h\right)  + L \left(\frac{p_h^{j}-p_h^{j-1}}{\tau},q_h\right)  + (g_h^{j+1},q_h),
   \quad \forall \ q_h \in Q_h,  \nonumber 
\end{eqnarray}
\State {\bf Step 2:} Given $p_h^{j+1} \in Q_h$, find ${\bm u}_h^{j+1} \in {\bm V}_h$ such that
\begin{equation} \label{displacement_mass_lumping}
a(\bm{u}_h^{j+1},\bm{v}_h) =  \alpha( p_h^{j+1}, \ddiv \bm{v}_h) + (\bm{f}_h^{j+1},\bm{v}_h),
     \quad \forall \  \bm{v}_h \in \bm V_h. 
\end{equation}
\EndFor
\end{algorithmic}
\end{algorithm} 
Since we cannot use the above scheme to compute  $p_h^{1}$, as before, the following fully implicit scheme is used to obtain the solutions ${\bm u}_h^{1}$ and $p_h^{1 }$ at the first time level, 
 \begin{eqnarray}
  && a(\bm{u}_h^1,\bm{v}_h) -\alpha (p_h^1, \ddiv \bm{v}_h)  
  = (\bm{f}_h^1,\bm{v}_h ),
     \quad \forall \  \bm{v}_h \in \bm V_h, \label{eqn:u_2} \\
  &&  \frac{1}{\beta} \left(\frac{p_h^1-p_h^0}{\tau} ,q_h\right) + L \left(\frac{p_h^{1}-p_h^{0}}{\tau},q_h\right)_0  + \alpha\left(\ddiv \frac{\bm{u}_h^{1}-\bm{u}_h^{0}}{\tau} ,q_h\right)  + b(p_h^1,q_h) \label{eqn:p_2}\\
  & &  \quad \quad - L \left(\frac{p_h^{1}-p_h^{0}}{\tau},q_h\right)  = (g_h^1,q_h),
   \; \forall \ q_h \in Q_h. \nonumber
\end{eqnarray}
Next, we derive the error estimate of \cref{alg:iterative_stable_2}. Note that, for this algorithm, there is no need to add  stabilization terms, as was done in \cref{alg:iterative_stable}. 

\subsection{Convergence Analysis}
In this section we provide a convergence analysis for \cref{alg:iterative_stable_2}.  We use the same elliptic projections and the corresponding error estimates introduced in \cref{sec:in-sup-stable-convergence-analysis} with $k=\ell = 1$. Next, we prove two lemmas which are modifications of \cref{est_time_0} and \cref{est_time_1} due to the fact that, instead of the usual inf-sup condition \eqref{ine_inf-sup}, we only have the weak inf-sup condition \eqref{ine:inf-sup_with_epsilon} for the P1-P1 discretization. 
 \begin{lemma} \label{est_time_0_with_epsilon}
For any $\vartheta >0 $, the following estimate holds for $j=0,\ldots, N-1$,
 \begin{equation} \label{ine:A_norm_u>L2_norm_p_with_epsilon}
	\| \delta \theta_{\bm{u}}^{j+1} \|^2_A \geq \frac{\eta^2}{1+ 2 \vartheta}\frac{\alpha^2}{\lambda + 2\mu/d} \| \delta \theta_p^{j+1} \|^2  - \frac{\epsilon^2}{2 \vartheta} \frac{\alpha^2}{\lambda + 2\mu/d} h^2 \| \nabla \delta \theta_p^{j+1} \|^2.
\end{equation}
 \end{lemma}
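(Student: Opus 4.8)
The plan is to reuse the argument of \cref{est_time_0} almost verbatim, replacing the use of the genuine inf-sup condition by the weak one \eqref{ine:inf-sup_with_epsilon} and absorbing the resulting defect term $\epsilon\,\tfrac{1}{\sqrt{\lambda+2\mu/d}}\,h\|\nabla q_h\|$ through Young's inequality with the free parameter $\vartheta$.

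First I would note that the orthogonality identity \eqref{eqn:delta_theta_u_j2}, $a(\delta\theta_{\bm u}^{j+1},\bm v_h)-\alpha(\delta\theta_p^{j+1},\nabla\cdot\bm v_h)=0$ for all $\bm v_h\in\bm V_h$, still holds in the P1-P1 setting, since it follows only from the displacement step \eqref{displacement_mass_lumping}, the elliptic projection \eqref{eqn:elliptic-proj-u}, the continuous momentum equation \eqref{variational1}, and the initial condition \eqref{eqn:ini-u}, and none of these was altered by the mass-lumping stabilization. Next, for a given $\delta\theta_p^{j+1}$, I would invoke \eqref{ine:inf-sup_with_epsilon} to produce $\bm w_h\in\bm V_h$ with $\|\bm w_h\|_A=\|\delta\theta_p^{j+1}\|$ and
\[
(\nabla\cdot\bm w_h,\delta\theta_p^{j+1})\ \ge\ \frac{1}{\sqrt{\lambda+2\mu/d}}\Bigl(\eta\|\delta\theta_p^{j+1}\|-\epsilon\,h\,\|\nabla\delta\theta_p^{j+1}\|\Bigr)\|\bm w_h\|_A,
\]
then test \eqref{eqn:delta_theta_u_j2} with $\bm v_h=\bm w_h$ and bound the right-hand side by $a(\delta\theta_{\bm u}^{j+1},\bm w_h)\le\|\delta\theta_{\bm u}^{j+1}\|_A\|\bm w_h\|_A$. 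Dividing by $\|\bm w_h\|_A$ gives
\[
\frac{\alpha\eta}{\sqrt{\lambda+2\mu/d}}\,\|\delta\theta_p^{j+1}\|\ \le\ \|\delta\theta_{\bm u}^{j+1}\|_A+\frac{\alpha\epsilon}{\sqrt{\lambda+2\mu/d}}\,h\,\|\nabla\delta\theta_p^{j+1}\|.
\]

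To finish, both sides of this inequality are nonnegative, so I would square it and then apply $(a+b)^2\le(1+2\vartheta)a^2+(1+\tfrac{1}{2\vartheta})b^2$ with $a=\|\delta\theta_{\bm u}^{j+1}\|_A$ and $b=\tfrac{\alpha\epsilon}{\sqrt{\lambda+2\mu/d}}h\|\nabla\delta\theta_p^{j+1}\|$; dividing through by $1+2\vartheta$ and using $\tfrac{1+1/(2\vartheta)}{1+2\vartheta}=\tfrac{1}{2\vartheta}$ gives exactly \eqref{ine:A_norm_u>L2_norm_p_with_epsilon} after rearranging. I do not anticipate a real obstacle: the only two things requiring care are checking that the orthogonality identity \eqref{eqn:delta_theta_u_j2} is unaffected by the mass-lumping modification of \cref{alg:iterative_stable_2}, and choosing the weights in Young's inequality so that the constants collapse precisely to $\eta^2/(1+2\vartheta)$ and $\epsilon^2/(2\vartheta)$.
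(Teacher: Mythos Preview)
Your proposal is correct and follows essentially the same route as the paper: invoke the weak inf--sup condition \eqref{ine:inf-sup_with_epsilon} to pick a test function $\bm w_h$, verify that the orthogonality relation \eqref{eqn:delta_theta_u_j2} carries over unchanged (the paper labels it \eqref{eqn:delta_theta_u_j2_we}), test with $\bm w_h$ to obtain the linear bound, and then apply Young's inequality with parameter $\vartheta$ to reach \eqref{ine:A_norm_u>L2_norm_p_with_epsilon}. Your explicit computation $(1+1/(2\vartheta))/(1+2\vartheta)=1/(2\vartheta)$ is exactly what makes the constants match, and your observation that the displacement step \eqref{displacement_mass_lumping} is unaffected by the mass-lumping stabilization is the right justification for reusing the orthogonality identity.
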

 \begin{proof}
From the weak inf-sup condition \eqref{ine:inf-sup_with_epsilon}, for any given $\delta \theta_p^{j+1}$, there exist a $\bm{w}_h \in \bm{V}_h$, with $\| \bm{w}_h \|_A = \| \delta \theta_p^{j+1} \|$, such that
\begin{align*}
		(\nabla \cdot \bm{w}_h,  \delta \theta_p^{j+1}) \geq \left( \eta \frac{1}{\sqrt{\lambda + 2\mu/d}} \| \delta \theta_p^{j+1} \| -\epsilon \frac{1}{\sqrt{\lambda + 2\mu/d}} h \| \nabla \delta \theta_p^{j+1} \| \right) \| \bm{w}_h \|_A, \quad 
\end{align*}
Based on the definitions of the elliptic projections, \eqref{variational1} and \eqref{displacement_mass_lumping}, following the same argument as in \cref{est_time_0}, we again have
 \begin{equation} \label{eqn:delta_theta_u_j2_we}
 	a( \delta \theta_{\bm{u}}^{j+1}, \bm{v}_h) - \alpha(\delta \theta_p^{j+1}, \nabla \cdot \bm{v}_h) = 0, \quad \forall \, \bm{v}_h \in \bm{V}_h, \quad \text{for} \ j = 0, 1, \ldots, N-1.
 \end{equation}
Testing \eqref{eqn:delta_theta_u_j2_we} with $\bm{v}_h = \bm{w}_h$, we have
\begin{align*}
& \quad \alpha \left( \eta \frac{1}{\sqrt{\lambda + 2\mu/d}} \| \delta \theta_p^{j+1} \| -\epsilon \frac{1}{\sqrt{\lambda + 2\mu/d}} h \| \nabla \delta \theta_p^{j+1} \| \right) \| \bm{w}_h \|_A \leq \alpha (\delta \theta_p^{j+1}, \nabla \cdot \bm{w}_h) \\
	& = a(\delta \theta_{\bm{u}}^{j+1}, \bm{w}_h) \leq \| \delta \theta_{\bm{u}}^{j+1} \|_A \| \bm{w}_h \|_A,
\end{align*}
which implies 
\begin{equation*}
		\| \delta \theta_{\bm{u}}^{j+1} \|_A \geq \eta \frac{\alpha}{\sqrt{\lambda + 2\mu/d}} \| \delta \theta_p^{j+1} \| - \epsilon \frac{\alpha}{\sqrt{\lambda + 2\mu/d}} h \| \nabla \delta \theta_p^{j+1} \|.
\end{equation*}
Finally, using Young's inequality with constant $\vartheta > 0$,  \eqref{ine:A_norm_u>L2_norm_p_with_epsilon} is obtained.
 \end{proof}
 
 \begin{lemma} \label{est_time_1_with_epsilon}
	If $L = \omega \frac{\alpha^2}{\lambda + 2\mu/d}$, $\omega \geq 1$, at the first time level, i.e., $j=1$, the following inequality holds
	\begin{eqnarray}
	  &&\quad 	\frac{L}{2} \| \delta \theta_p^1 \|^2 +  \frac{\tau}{2} \| \theta_p^1 \|_B^2 \leq \max \left\{ \frac{2(\omega+2\epsilon^2 C_2)}{\eta^2},1 \right\} \left[ \frac{\tau}{2} \| \theta_p^0 \|_B^2 + \frac{\tau^2}{4 \beta} \| R_p^1 \|^2 \right. \label{ine:error_step_1_with_epsilon}  \\ 
		&& \qquad \qquad \qquad \qquad \qquad \ \left.+ \left( \frac{\omega  + 2\epsilon^2 C_2}{2 \eta^2}  \right) \frac{\tau^2}{\omega} \| R_{\bm{u}}^1 \|_A^2  + \frac{L}{2C_1} h^2 \| \nabla \delta \bar{p}_h^1 \|^2\right].  \nonumber 
	\end{eqnarray}
\end{lemma}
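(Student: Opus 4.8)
The plan is to follow the argument of \cref{est_time_1}, using \cref{est_time_0_with_epsilon} in place of \cref{est_time_0} and keeping track of the extra terms created by the mass-lumped stabilization, which I abbreviate by $s_h(\cdot,\cdot):=(\cdot,\cdot)_0-(\cdot,\cdot)$. First I would write down the two error relations at the first time step. Since \eqref{displacement_mass_lumping}, \eqref{eqn:u_2} and \eqref{eqn:ini-u} are algebraically identical to their counterparts in \cref{sec:2}, subtracting the elliptic projection \eqref{eqn:elliptic-proj-u} and differencing the $t=t_1$ and $t=t_0$ identities gives \eqref{eqn:delta_theta_u_j2_we} with $j=0$, exactly as in the proof of \cref{est_time_0_with_epsilon}. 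For the pressure I would subtract \eqref{variational2} at $t=t_1$ from \eqref{eqn:p_2}, insert \eqref{eqn:elliptic-proj-p}, split $p_h^j=\bar{p}_h(t_j)+\theta_p^j$ and $\bm{u}_h^j=\bar{\bm{u}}_h(t_j)+\theta_{\bm{u}}^j$, and use $\partial_t p(t_1)-d_t p_h^1=R_p^1-\delta\theta_p^1/\tau$, $\partial_t\bm{u}(t_1)-d_t\bm{u}_h^1=R_{\bm{u}}^1-\delta\theta_{\bm{u}}^1/\tau$ together with $p_h^1-p_h^0=\delta\bar{p}_h^1+\delta\theta_p^1$ in the lumped term; this yields, for all $q_h\in Q_h$,
\[
\tfrac1\beta(\delta\theta_p^1,q_h)+L\,s_h(\delta\theta_p^1,q_h)+\alpha(\nabla\cdot\delta\theta_{\bm{u}}^1,q_h)+\tau\,b(\theta_p^1,q_h)=\tfrac\tau\beta(R_p^1,q_h)+\tau\alpha(\nabla\cdot R_{\bm{u}}^1,q_h)-L\,s_h(\delta\bar{p}_h^1,q_h),
\]
the two $s_h$-terms being the only new ingredients relative to \cref{est_time_1}.

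Next I would test the elasticity relation with $\bm{v}_h=\delta\theta_{\bm{u}}^1$ and the pressure relation with $q_h=\delta\theta_p^1$, add them, replace $\alpha(\nabla\cdot\delta\theta_{\bm{u}}^1,\delta\theta_p^1)$ by $\|\delta\theta_{\bm{u}}^1\|_A^2$, and apply the polarization identity to $\tau\,b(\theta_p^1,\theta_p^1-\theta_p^0)$, obtaining
\[
\tfrac1\beta\|\delta\theta_p^1\|^2+L\,s_h(\delta\theta_p^1,\delta\theta_p^1)+\|\delta\theta_{\bm{u}}^1\|_A^2+\tfrac\tau2\|\theta_p^1\|_B^2+\tfrac\tau2\|\delta\theta_p^1\|_B^2=\tfrac\tau2\|\theta_p^0\|_B^2+\tfrac\tau\beta(R_p^1,\delta\theta_p^1)+\tau\alpha(\nabla\cdot R_{\bm{u}}^1,\delta\theta_p^1)-L\,s_h(\delta\bar{p}_h^1,\delta\theta_p^1).
\]
I would then bound the right-hand side as in \cref{est_time_1}: Young's inequality gives $\tfrac\tau\beta(R_p^1,\delta\theta_p^1)\le\tfrac1\beta\|\delta\theta_p^1\|^2+\tfrac{\tau^2}{4\beta}\|R_p^1\|^2$ (the first term cancelling the matching left-hand term), and for the $R_{\bm{u}}^1$-term I use $a(\bm{v},\bm{v})\ge(\lambda+2\mu/d)\|\nabla\cdot\bm{v}\|^2$ and Young's inequality with its weight tuned so that the induced $\|\delta\theta_p^1\|^2$-contribution is exactly one half of the lower bound obtained in the next step, which produces the coefficient $(\tfrac{\omega+2\epsilon^2 C_2}{2\eta^2})\tfrac{\tau^2}{\omega}\|R_{\bm{u}}^1\|_A^2$. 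For the new cross term, Cauchy--Schwarz for the semi-inner product $s_h$ gives $-L\,s_h(\delta\bar{p}_h^1,\delta\theta_p^1)\le\tfrac L2 s_h(\delta\theta_p^1,\delta\theta_p^1)+\tfrac L2 s_h(\delta\bar{p}_h^1,\delta\bar{p}_h^1)$; the first half is absorbed into $L\,s_h(\delta\theta_p^1,\delta\theta_p^1)$ on the left (leaving $\tfrac L2 s_h(\delta\theta_p^1,\delta\theta_p^1)$ there), and the second is estimated by the mass-lumping consistency bound $s_h(\delta\bar{p}_h^1,\delta\bar{p}_h^1)\le C_1^{-1}h^2\|\nabla\delta\bar{p}_h^1\|^2$, producing the last term of \eqref{ine:error_step_1_with_epsilon}.

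The crux is to bound $\|\delta\theta_{\bm{u}}^1\|_A^2$ from below by a multiple of $L\|\delta\theta_p^1\|^2$ despite the $\epsilon$-defect in the weak inf-sup condition \eqref{ine:inf-sup_with_epsilon}. I would apply \cref{est_time_0_with_epsilon} with the free parameter $\vartheta=\epsilon^2 C_2/\omega$, where $C_2$ is the constant in the reverse mass-lumping bound $s_h(q_h,q_h)\ge C_2^{-1}h^2\|\nabla q_h\|^2$; with this choice the defect $-\tfrac{\epsilon^2}{2\vartheta}\tfrac{\alpha^2}{\lambda+2\mu/d}h^2\|\nabla\delta\theta_p^1\|^2$ equals $-\tfrac{L}{2C_2}h^2\|\nabla\delta\theta_p^1\|^2$ and is exactly compensated by the leftover $\tfrac L2 s_h(\delta\theta_p^1,\delta\theta_p^1)\ge\tfrac{L}{2C_2}h^2\|\nabla\delta\theta_p^1\|^2$, leaving $\|\delta\theta_{\bm{u}}^1\|_A^2+\tfrac L2 s_h(\delta\theta_p^1,\delta\theta_p^1)\ge\tfrac{\eta^2}{\omega+2\epsilon^2 C_2}L\|\delta\theta_p^1\|^2$. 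Inserting this into the energy identity, cancelling the $\|\delta\theta_p^1\|^2$ produced by the $R_{\bm{u}}^1$-estimate, and dropping the nonnegative $\tfrac\tau2\|\delta\theta_p^1\|_B^2$, I obtain $\tfrac{\eta^2}{2(\omega+2\epsilon^2 C_2)}L\|\delta\theta_p^1\|^2+\tfrac\tau2\|\theta_p^1\|_B^2$ on the left, bounded by $\tfrac\tau2\|\theta_p^0\|_B^2+\tfrac{\tau^2}{4\beta}\|R_p^1\|^2+(\tfrac{\omega+2\epsilon^2 C_2}{2\eta^2})\tfrac{\tau^2}{\omega}\|R_{\bm{u}}^1\|_A^2+\tfrac{L}{2C_1}h^2\|\nabla\delta\bar{p}_h^1\|^2$ on the right; multiplying through by $\max\{2(\omega+2\epsilon^2 C_2)/\eta^2,1\}$ turns the left-hand side into $\tfrac L2\|\delta\theta_p^1\|^2+\tfrac\tau2\|\theta_p^1\|_B^2$ and yields \eqref{ine:error_step_1_with_epsilon}.

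The step I expect to be the main obstacle is this compensation: it hinges on the two-sided equivalence of the stabilization form $s_h$ with $h^2\|\nabla\cdot\|^2$ on $Q_h$ (valid on quasi-uniform meshes), the \emph{lower} bound being precisely what lets the stabilization neutralize the weak-inf-sup defect and explaining why \cref{alg:iterative_stable_2} needs no further artificial term, while the upper bound controls the $\delta\bar{p}_h^1$ cross term. One then has to verify that the choice $\vartheta=\epsilon^2 C_2/\omega$ together with all the Young's-inequality weights makes the constants collapse exactly to $\max\{2(\omega+2\epsilon^2 C_2)/\eta^2,1\}$ and keeps the residual coefficient of $\|\delta\theta_p^1\|^2$ nonnegative for every admissible $\omega\ge1$; the remaining manipulations are the routine book-keeping already carried out in the proof of \cref{est_time_1}.
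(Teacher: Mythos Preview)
Your proposal is correct and follows essentially the same route as the paper's proof: the paper also derives the energy identity with the extra $\|\cdot\|_Z$-terms (your $s_h$), applies \cref{est_time_0_with_epsilon} with the same choice $\vartheta=\epsilon^2 C_2/\omega$, uses the two-sided equivalence $C_1\|q_h\|_Z^2\le h^2\|\nabla q_h\|^2\le C_2\|q_h\|_Z^2$ to compensate the weak-inf-sup defect with the leftover stabilization and to bound the $\delta\bar{p}_h^1$ cross term, and then applies Young's inequality to the $R_p^1$ and $R_{\bm{u}}^1$ terms with exactly the weights you describe. The only cosmetic difference is that the paper first converts the $h^2\|\nabla\delta\theta_p^1\|^2$ defect into a $\|\cdot\|_Z^2$-term before cancelling, whereas you bound $\frac{L}{2}s_h$ below by the $h^2\|\nabla\cdot\|^2$-term and cancel directly; these are the same inequality read in opposite directions.
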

\begin{proof}
	Using \eqref{eqn:delta_theta_u_j2_we},  \eqref{variational2}, \eqref{eqn:p_2}, and \eqref{eqn:ini-p}, we have
	\begin{align*}
		&a(\delta \theta_{\bm{u}}^{1}, \bm{v}_h) - \alpha(\delta \theta_p^{1}, \nabla \cdot \bm{v}_h) = 0, \quad \forall \, \bm{v}_h \in \bm{V}_h  \\
		&\frac{1}{\beta} (\delta \theta_{p}^{1}, q_h) + \alpha (\nabla \cdot \delta \theta_{\bm{u}}^{1}, q_h) + \tau \, b(\theta_p^{1}, q_h) + L(\delta \theta_p^1, q_h)_Z  \\
		& \quad = \frac{\tau}{\beta} (R_p^{1}, q_h) + \tau \alpha(\nabla \cdot R_{\bm{u}}^{1}, q_h) - L(\delta \bar{p}_h^1, q_h)_Z , \quad \forall \, q_h \in Q_h, 
	\end{align*}
	where  $\| \cdot \|_Z$ is the norm associated to the bilinear form $(\cdot,\cdot)_0 - (\cdot,\cdot)$.  Taking $\bm{v}_h = \delta \theta_{\bm{u}}^1$ and $q_h = \delta \theta_p^1$ and adding the above two equations, we obtain
	\begin{align*}
		&\quad \| \delta \theta_{\bm{u}}^1 \|_A^2 + \frac{1}{\beta} \| \delta \theta_p^1 \|^2 + \tau b(\theta_p^1, \theta_p^1 - \theta_p^0) + L\| \delta \theta_p^1 \|_Z^2 \\
		& = \frac{\tau}{\beta} (R_p^1, \delta \theta_p^1) + \tau \alpha(\nabla \cdot R_{\bm{u}}^1, \delta \theta_p^1) - L(\delta \bar{p}_h^1, \delta \theta_p^1)_Z.
	\end{align*}
	Thus, using \cref{est_time_0_with_epsilon} ($j=0$), $L = \omega \frac{\alpha^2}{\lambda + 2\mu /d}$, $\omega \geq 1$, the Cauchy-Schwarz inequality, and the fact that $a(\bm{u}, \bm{u}) \geq (\lambda + 2\mu/d) \| \nabla \cdot \bm{u} \|^2$, we have
	\begin{align*}
		& \quad \frac{\eta^2}{\omega (1+2 \vartheta)} L \| \delta \theta_p^1 \|^2 + \frac{1}{\beta} \| \delta \theta_p^1 \|^2 + \frac{\tau}{2} \| \theta_p^1 \|_B^2 + L \| \delta \theta_p^1 \|_Z^2 \\
		& \leq \frac{\tau}{2} \| \theta_p^0 \|_B^2 + \frac{\epsilon^2L}{2\omega \vartheta} h^2 \| \nabla \delta \theta_p^1 \|^2 - L(\delta \bar{p}_h^1, \delta \theta_p^1)_Z  + \frac{\tau}{\beta} (R_p^1, \delta \theta_p^1) + \tau \alpha(\nabla \cdot R_{\bm{u}}^1, \delta \theta_p^1) \\
		& \leq \frac{\tau}{2} \| \theta_p^0 \|_B^2 + \left( \frac{\epsilon^2C_2}{2\omega \vartheta}  +\frac{1}{2}\right) L \| \delta \theta_p^1 \|_Z^2 + \frac{L}{2} \| \delta \bar{p}_h^1\|_Z^2 + \frac{\tau}{\beta} (R_p^1, \delta \theta_p^1) + \tau \alpha(\nabla \cdot R_{\bm{u}}^1, \delta \theta_p^1),
 	\end{align*}
 	where we used the norm equivalence $C_1 \| p \|_Z^2 \leq h^2 \| \nabla p \|^2 \leq C_2 \|  p \|_Z^2$, which can be found in \cite{Pe2025}.  Choose $\vartheta = \frac{\epsilon^2 C_2}{\omega}$ and use the Young's inequalty, we obtain that
 	\begin{align*}
 		&  \frac{\eta^2}{(\omega + 2 \epsilon^2 C_2)} L \| \delta \theta_p^1 \|^2 + \frac{1}{\beta} \| \delta \theta_p^1 \|^2 + \frac{\tau}{2} \| \theta_p^1 \|_B^2 
 		 \leq \frac{\tau}{2} \| \theta_p^0 \|_B^2  + \frac{L}{2 C_1} h^2 \| \nabla \delta \bar{p}_h^1\|^2 \\
 		 & \qquad + \frac{\tau^2}{4 \beta} \| R_p^1 \|^2 + \frac{1}{\beta} \| \delta \theta_p^1 \|^2 + \left(  \frac{\omega + 2 \epsilon^2 C_2}{2 \eta^2} \right) \frac{\tau^2}{\omega} \| R_{\bm{u}}^1 \|_A^2 + \frac{\eta^2}{2(\omega + 2 \epsilon^2C_2)} L \| \delta \theta_p^1 \|^2
 	\end{align*}
 	which simplifies to \eqref{ine:error_step_1_with_epsilon} and completes the proof.
\end{proof}

We now present the error estimates for \cref{alg:iterative_stable_2} in the following theorem.
 \begin{thm} \label{thm:convergence-alg3}
	Let $\bm{u}(t)$ and $p(t)$ be the solutions of \eqref{variational1} and \eqref{variational2}, and $\bm{u}_h^n$ and $p_h^n$ be the solutions obtained by  \cref{alg:iterative_stable_2}. If $L = \omega \frac{\alpha^2}{\lambda + 2\mu/d}$, $\omega \geq 1$, then the following error estimates hold,
	\begin{eqnarray}
		&& \quad \| \bm{u}(t_n) - \bm{u}_h^n \|_A + \| p(t_n) - p_h^n \|_B  +  \| p(t_n) - p_h^n \|   \label{ine:alg3_error_u} \\
		&& \leq C \left( \| e_{\bm{u}}^0 \|_A + \| e_p^0 \|_B  \right) +  Ch \left( |u(t_0)|_2 +  |p(t_0)|_2   + |u(t_n)|_2 +  |p(t_n)_2| \right)  \nonumber \\
		&& \quad  + C \tau  \left[  \left( \int_{t_0}^{t_{n+1}} \| \partial_{tt} p(s) \|^2 \, \mathrm{d}s \right)^{\frac{1}{2}} +  \left( \int_{t_0}^{t_{n+1}} \| \partial_{tt} \bm{u}(s) \|_1^2 \, \mathrm{d}s \right)^{\frac{1}{2}}   \right] \nonumber \\
		&& \quad + Ch \left[  \left( \int_{t_0}^{t_{n+1}} | \partial_t p(s) |_2^2 \, \mathrm{d}s \right)^{\frac{1}{2}} + \left( \int_{t_0}^{t_{n+1}} |\partial_t \bm{u}(s)|^2_2 + |\partial_t p(s)|_2^2 \, \mathrm{d}s \right)^{\frac{1}{2}} \right. \nonumber \\
		&& \left. \qquad \qquad + \left(  \tau \sum_{j=0}^{n} |\partial_t p(t_{j+1})|_1^2 \right)^{\frac{1}{2}} + \tau  \left( \int_{t_0}^{t_{n+1}} | \partial_{tt} p(s) |_1^2 \mathrm{d}s \right)^{\frac{1}{2}} \right] . \nonumber 
	\end{eqnarray}
\end{thm}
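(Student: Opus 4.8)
The plan is to follow the proof of \cref{thm:convergence-alg2} closely, tracking the extra contributions coming from the mass-lumping stabilization in \cref{alg:iterative_stable_2} and from the fact that only the weak inf-sup condition \eqref{ine:inf-sup_with_epsilon} is available. First I would derive the error equation for $\theta_p^{j+1}$, the analogue of \eqref{eqn:theta_p_j+1}: subtract the pressure update \eqref{pressure_mass_lumping} from \eqref{variational2} at $t=t_{j+1}$, insert the elliptic projections \eqref{eqn:elliptic-proj-u}--\eqref{eqn:elliptic-proj-p}, and split the stabilization as $L(d_t p_h^{j+1},q_h)_0-L(d_t p_h^{j},q_h)=L(d_t p_h^{j+1}-d_t p_h^{j},q_h)+L(d_t p_h^{j+1},q_h)_Z$, where $(\cdot,\cdot)_Z:=(\cdot,\cdot)_0-(\cdot,\cdot)$ is the form appearing in \cref{est_time_1_with_epsilon}. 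Substituting $p_h^{j+1}=\theta_p^{j+1}+\bar{p}_h(t_{j+1})$ reproduces \eqref{eqn:theta_p_j+1} with two additional terms: $L(\delta\theta_p^{j+1},q_h)_Z$ on the left-hand side (the implicit part of the stabilization) and $-L(\delta\bar{p}_h^{j+1},q_h)_Z$ on the right-hand side (its consistency error). The displacement error equation is unchanged and is exactly \eqref{eqn:delta_theta_u_j2_we}.

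Testing \eqref{eqn:delta_theta_u_j2_we} with $\bm{v}_h=\delta\theta_{\bm{u}}^{j}$ and the pressure error equation with $q_h=\delta\theta_p^{j+1}$ and adding, the $\alpha$-coupling terms cancel, and the polarization identities used in \cref{thm:convergence-alg2} give an energy identity differing from the one there only by an extra $L\|\delta\theta_p^{j+1}\|_Z^2$ on the left and an extra $-L(\delta\bar{p}_h^{j+1},\delta\theta_p^{j+1})_Z$ on the right. As in \cref{thm:convergence-alg2}, $\|\delta\theta_{\bm{u}}^{j+1}-\delta\theta_{\bm{u}}^{j}\|_A^2$ is controlled (using only \eqref{eqn:delta_theta_u_j2_we}) by a multiple of $\|\delta\theta_p^{j+1}-\delta\theta_p^{j}\|^2$ and absorbed. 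The new point is that, in place of \cref{est_time_0}, one invokes \cref{est_time_0_with_epsilon} with the choice $\vartheta=\epsilon^2 C_2/\omega$; using the equivalence $C_1\|p\|_Z^2\leq h^2\|\nabla p\|^2\leq C_2\|p\|_Z^2$ from \cite{Pe2025}, this bounds $\frac{1}{2}\|\delta\theta_{\bm{u}}^{j+1}\|_A^2$ below by $\frac{\eta^2 L}{2(\omega+2\epsilon^2 C_2)}\|\delta\theta_p^{j+1}\|^2-\frac{L}{4}\|\delta\theta_p^{j+1}\|_Z^2$, so the negative $\|\cdot\|_Z^2$ piece is swallowed by the $L\|\delta\theta_p^{j+1}\|_Z^2$ on the left; the term $-L(\delta\bar{p}_h^{j+1},\delta\theta_p^{j+1})_Z$ is dealt with by Cauchy--Schwarz and Young's inequality, absorbing a further $\|\delta\theta_p^{j+1}\|_Z^2$ and leaving $\frac{L}{2C_1}h^2\|\nabla\delta\bar{p}_h^{j+1}\|^2$ on the right. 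Bounding $I$ from \eqref{def_of_I} by Young's inequality exactly as in \cref{thm:convergence-alg2} (with $\omega$ replaced by $\omega+2\epsilon^2 C_2$ in the fractions that absorb $\|\delta\theta_p^{j+1}\|^2$) then yields the per-step recursion
\begin{equation*}
\frac{1}{2}\|\delta\theta_{\bm{u}}^{j}\|_A^2 + \frac{L}{2}\|\delta\theta_p^{j+1}\|^2 + \frac{\tau}{2}\|\theta_p^{j+1}\|_B^2 \leq \frac{L}{2}\|\delta\theta_p^{j}\|^2 + \frac{\tau}{2}\|\theta_p^{j}\|_B^2 + H^{j+1} + \frac{L}{2C_1}h^2\|\nabla\delta\bar{p}_h^{j+1}\|^2,
\end{equation*}
with $H^{j+1}$ of the same form as in \cref{thm:convergence-alg2}.

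Summing from $j=1$ to $j=n$ telescopes the $\frac{L}{2}\|\delta\theta_p^{\cdot}\|^2+\frac{\tau}{2}\|\theta_p^{\cdot}\|_B^2$ terms; $\sum_j H^{j+1}$ is bounded by \eqref{ine:bound_R_p}, \eqref{ine:bound_R_u} and \cref{est_time_2}, and $\frac{L}{2}\|\delta\theta_p^{1}\|^2+\frac{\tau}{2}\|\theta_p^{1}\|_B^2$ by \cref{est_time_1_with_epsilon} (which, unlike \cref{est_time_1}, already carries the analogous $\frac{L}{2C_1}h^2\|\nabla\delta\bar{p}_h^{1}\|^2$). The genuinely new term is $\frac{L}{2C_1}\sum_{j}h^2\|\nabla\delta\bar{p}_h^{j+1}\|^2$: writing $\delta\bar{p}_h^{j+1}=\tau\,\partial_t p(t_{j+1})-\tau R_p^{j+1}$ and expanding $R_p^{j+1}$ by the Taylor-with-integral-remainder identity as in the proof of \cref{est_time_2} gives a dominant contribution of the form $\tau^2 h^2\sum_{j}|\partial_t p(t_{j+1})|_1^2+\tau^3 h^2\int_{t_0}^{t_{n+1}}|\partial_{tt}p(s)|_1^2\,\mathrm{d}s$, while the projection part $h\|\nabla\partial_t\rho_p\|$ is $O(h^2)$ by \eqref{ine:err-rho-p-B} and is subsumed into the $Ch(\cdots)$ terms of \eqref{ine:alg3_error_u}; after dividing by the $\tau$ in front of $\|\theta_p^{n+1}\|_B^2$ and taking square roots, these become exactly the last two terms of \eqref{ine:alg3_error_u}. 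Finally, $\|\theta_{\bm{u}}^n\|_A\leq\sqrt{t_n/\tau}\,\big(\sum_{j=1}^n\|\delta\theta_{\bm{u}}^j\|_A^2\big)^{1/2}+\|\theta_{\bm{u}}^0\|_A$, and $\|\theta_p^n\|_B$ and $\|\theta_p^n\|$ (via the Poincar\'{e} inequality $\|\theta_p^n\|\leq C_p\|\theta_p^n\|_B$) are read off the summed estimate with $n$ replaced by $n-1$; combining with the triangle inequality, the projection bounds \eqref{ine:err-rho-u-A}--\eqref{ine:err-rho-p-B} at $k=\ell=1$, and $\theta_p^0=\rho_p(t_0)-e_p^0$, $\theta_{\bm{u}}^0=\rho_{\bm{u}}(t_0)-e_{\bm{u}}^0$, gives \eqref{ine:alg3_error_u}. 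The main obstacle is the coordinated absorption in the energy step: the negative $h^2\|\nabla\delta\theta_p^{j+1}\|^2$ produced by the weak inf-sup condition through \cref{est_time_0_with_epsilon} and the stabilization consistency term $-L(\delta\bar{p}_h^{j+1},\delta\theta_p^{j+1})_Z$ must both be absorbed into the single $L\|\delta\theta_p^{j+1}\|_Z^2$ supplied by the implicit part of the stabilization, which is what forces the choice $\vartheta=\epsilon^2 C_2/\omega$ together with the two-sided equivalence of $\|\cdot\|_Z$ and $h\|\nabla\cdot\|$; a secondary difficulty is keeping the $\tau$-powers in $\sum_{j}h^2\|\nabla\delta\bar{p}_h^{j+1}\|^2$ consistent so that this term enters at the correct $O(h)$ order and only through the weak seminorms $|\partial_t p|_1$ and $|\partial_{tt}p|_1$ appearing in \eqref{ine:alg3_error_u}.
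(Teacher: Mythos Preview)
Your proposal is correct and follows essentially the same route as the paper's proof: the same error equations with the extra $(\cdot,\cdot)_Z$ contributions, the same test functions and polarization argument, the same use of \cref{est_time_0_with_epsilon} together with the equivalence $C_1\|p\|_Z^2\le h^2\|\nabla p\|^2\le C_2\|p\|_Z^2$ to absorb the negative weak-inf-sup term into $L\|\delta\theta_p^{j+1}\|_Z^2$, the same treatment of $-L(\delta\bar p_h^{j+1},\delta\theta_p^{j+1})_Z$ by Cauchy--Schwarz/Young, and the same estimate of $h^2\|\nabla\delta\bar p_h^{j+1}\|^2$ via $\delta\bar p_h^{j+1}=\tau\,\partial_t p(t_{j+1})-\tau R_p^{j+1}$. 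The only difference is cosmetic: you take $\vartheta=\epsilon^2 C_2/\omega$ whereas the paper takes $\vartheta=\epsilon^2 C_2/(2\omega)$, which changes the harmless constant $\omega+2\epsilon^2 C_2$ to $\omega+\epsilon^2 C_2$ in the estimate of $I$ but has no effect on the argument or the final result.
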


\begin{proof}
Based on \eqref{eqn:elliptic-proj-u} and \eqref{eqn:elliptic-proj-p}, from \eqref{variational2} and  \eqref{pressure_mass_lumping},  we obtain that
\begin{eqnarray}
		& & \qquad \quad \frac{1}{\beta} (\delta \theta_{p}^{j+1}, q_h) + \alpha (\nabla \cdot \delta \theta_{\bm{u}}^j, q_h) + \tau \, b(\theta_p^{j+1}, q_h) + L (\delta \theta_p^{j+1} - \delta \theta_p^{j}, q_h) \label{eqn:theta_p_j+1_ml}  \\ 
		& &  + L(\delta \theta_p^{j+1}, q_h)_Z 
		= \frac{\tau}{\beta} (R_p^{j+1}, q_h) + \tau \alpha(\nabla \cdot R_{\bm{u}}^{j+1}, q_h) - \tau^2 L(d_{tt} \bar{p}_h^j, q_h) - L(\delta \bar{p}_h^{j+1}, q_h)_Z  \nonumber \\
		& &  \qquad \qquad \qquad  \qquad + \tau^2 \alpha (\nabla \cdot d_{tt} \bar{\bm{u}}_h^j, q_h), \ \forall \, q_h \in Q_h.  \nonumber 
\end{eqnarray}
Let $\bm{v}_h = \delta \theta_{\bm{u}}^{j}$ in \eqref{eqn:delta_theta_u_j2_we} and $q_h = \delta \theta_p^{j+1}$ in \eqref{eqn:theta_p_j+1_ml}, adding the two equations, we have
\begin{eqnarray*}
	&& a( \delta \theta_{\bm{u}}^{j+1}, \delta \theta_{\bm{u}}^{j}) + \frac{1}{\beta} \| \delta \theta_p^{j+1} \|^2 + L (\delta \theta_p^{j+1}, \delta \theta_p^{j+1} - \delta \theta_p^{j})  \tau b(\theta_p^{j+1}, \theta_p^{j+1} - \theta_p^{j}) + L \| \delta \theta_p^{j+1}  \|_Z^2 \\
	&& =   \frac{\tau}{\beta} (R_p^{j+1}, \delta \theta_p^{j+1}) + \tau \alpha(\nabla \cdot R_{\bm{u}}^{j+1}, \delta \theta_p^{j+1}) - \tau^2 L(d_{tt} \bar{p}_h^j, \delta \theta_p^{j+1}) -L(\delta \bar{p}_h^{j+1}, \delta \theta_p^{j+1})_Z \\
	&& \quad+ \tau^2 \alpha (\nabla \cdot d_{tt} \bar{\bm{u}}_h^j, \delta \theta_p^{j+1}) = I - L(\delta \bar{p}_h^{j+1}, \delta \theta_p^{j+1})_Z := II,
\end{eqnarray*}
where $I$ was defined in \eqref{def_of_I}. Using the identity $a(a-b) = \frac{1}{2} (a^2-b^2 + (a-b)^2 )$ and a polarization identity, we have
\begin{eqnarray*}
	&& \quad 	\frac{1}{2} \| \delta \theta_{\bm{u}}^{j+1} \|_A^2 + \frac{1}{2} \| \delta \theta_{\bm{u}}^{j} \|_A^2 + \frac{1}{\beta} \| \delta \theta_p^{j+1} \|^2 + \frac{L}{2} \|\delta \theta_p^{j+1}\|^2 + \frac{\tau}{2} \|\theta_p^{j+1}\|_B^2 + \frac{L}{2} \| \delta \theta_p^{j+1} - \delta \theta_p^j \|^2 \\
	&& \quad + \frac{\tau}{2} \| \delta \theta_p^{j+1} \|_B^2 + L \| \delta \theta_p^{j+1} \|_Z^2 = \frac{L}{2} \|  \delta \theta_p^j \|^2 + \frac{\tau}{2} \| \theta_p^j \|_B^2  +  \frac{1}{2} \| \delta \theta_{\bm{u}}^{j+1}  - \delta \theta_{\bm{u}}^{j} \|_A^2+ II.
\end{eqnarray*}
Note that \eqref{ine:u_normA_bound_by_p_norm} still holds. 
By applying \cref{est_time_0_with_epsilon}
and using the assumption that $L = \omega \frac{\alpha^2}{\lambda + 2\mu /d}$, with $\omega \geq 1$, we obtain
\begin{align*}
	& \quad    \frac{\eta^2}{(2\omega)(1+2 \vartheta)} L \| \delta \theta_p^{j+1} \|^2 + \frac{1}{2} \| \delta \theta_{\bm{u}}^j \|_A^2 + \frac{1}{\beta} \| \delta \theta_p^{j+1} \|^2 + \frac{L}{2} \|\delta \theta_p^{j+1}\|^2 + \frac{\tau}{2} \|\theta_p^{j+1}\|_B^2 \\
	&  \quad + \frac{\tau}{2} \|  \delta \theta_p^{j+1}  \|_B^2 + L \| \delta \theta_p^{j+1} \|^2_Z \\
	& \leq \frac{L}{2} \|  \delta \theta_p^j \|^2 + \frac{\tau}{2} \| \theta_p^j \|_B^2   + \frac{\epsilon^2 C_2}{4 \omega \vartheta} L  \|  \delta \theta_p^{j+1} \|^2_Z  - L(\delta \bar{p}_h^{j+1}, \delta \theta_p^{j+1})_Z + I \\
	& \leq  \frac{L}{2} \|  \delta \theta_p^j \|^2 + \frac{\tau}{2} \| \theta_p^j \|_B^2   + \left(\frac{\epsilon^2 C_2}{4 \omega \vartheta} + \frac{1}{2}\right)L  \|  \delta \theta_p^{j+1} \|^2_Z  + \frac{L}{2} \| \delta \bar{p}_h^{j+1} \|_Z^2 + I \\
	&\leq  \frac{L}{2} \|  \delta \theta_p^j \|^2 + \frac{\tau}{2} \| \theta_p^j \|_B^2   + \left(\frac{\epsilon^2 C_2}{4 \omega \vartheta} + \frac{1}{2}\right)L  \|  \delta \theta_p^{j+1} \|^2_Z  + \frac{L}{2 C_1} h^2 \|  \nabla \delta \bar{p}_h^{j+1} \|^2 + I
\end{align*}
where we used the norm equivalence $C_1 \| p \|_Z^2 \leq h^2 \| \nabla p \|^2 \leq C_2 \|  p \|_Z^2$ which can be found in \cite{Pe2025}.  By choosing $\vartheta = \frac{\epsilon^2 C_2}{2 \omega}$ and  dropping $ \frac{\tau}{2} \|  \delta \theta_p^{j+1}  \|_B^2$,  we obtain
\begin{eqnarray}
&& \quad \quad \frac{\eta^2 }{2(\omega + \epsilon^2 C_2)} L \| \delta \theta_p^{j+1} \|^2 + \frac{1}{2} \| \delta \theta_{\bm{u}}^j \|_A^2 + \frac{1}{\beta} \| \delta \theta_p^{j+1} \|^2 + \frac{L}{2} \|\delta \theta_p^{j+1}\|^2 + \frac{\tau}{2} \|\theta_p^{j+1}\|_B^2 \label{ine:intermediate-error}\\
&&  \quad \leq   \frac{L}{2} \|  \delta \theta_p^j \|^2 + \frac{\tau}{2} \| \theta_p^j \|_B^2  + \frac{L}{2 C_1} h^2 \|  \nabla \delta \bar{p}_h^{j+1} \|^2 + I.  \nonumber 
\end{eqnarray}
Next, we estimate $I$.  Similarly to the proof of \cref{thm:convergence-alg2}, we mainly use the Young's inequality (with different constants in order to cancel the first term of \eqref{ine:intermediate-error}), 
\begin{align*}
	\lvert I \rvert 
	& \leq \frac{\tau^2}{4\beta} \| R_{p}^{j+1} \|^2   + \frac{1}{\beta}  \| \delta \theta_p^{j+1} \|^2  + \frac{3(\omega + \epsilon^2 C_2)}{2\eta^2} \frac{\tau^2}{\omega} \| R_{\bm{u}}^{j+1} \|_A^2 +   \frac{\eta^2}{6(\omega +  \epsilon^2 C_2)} L \|\delta \theta_p^{j+1} \|^2    \\
	& \quad  + \frac{3(\omega + \epsilon^2 C_2)}{2\eta^2} \tau^4 L \| d_{tt} \bar{p}_h^j \|^2+ \frac{\eta^2}{6( \omega +  \epsilon^2 C_2)} L \|\delta \theta_p^{j+1} \|^2 \\
	& \quad  + \frac{3(\omega + \epsilon^2 C_2)}{2\eta^2} \frac{\tau^4}{\omega} \| d_{tt} \bar{\bm{u}}_h^j \|_A^2  + \frac{\eta^2}{6(\omega +  \epsilon^2 C_2)}  L \|\delta \theta_p^{j+1} \|^2 \\
	& = \frac{\eta^2}{2( \omega +  \epsilon^2 C_2)}  L \|\delta \theta_p^{j+1} \|^2  + \frac{1}{\beta}  \| \delta \theta_p^{j+1} \|^2 + \frac{\tau^2}{4\beta} \| R_{p}^{j+1} \|^2 + \frac{3(\omega + \epsilon^2 C_2)}{2\eta^2}   \frac{\tau^2}{\omega} \| R_{\bm{u}}^{j+1} \|_A^2  \\
	& \quad + \frac{3(\omega + \epsilon^2 C_2)}{2\eta^2}   \tau^4L \| d_{tt} \bar{p}_h^j \|^2  +  \frac{3(\omega + \epsilon^2 C_2)}{2\eta^2}   \frac{\tau^4}{ \omega} \| d_{tt} \bar{\bm{u}}_h^j \|_A^2. 
\end{align*}
By plugging this back into \eqref{ine:intermediate-error}, we have
\begin{align*}
	& \quad \frac{1}{2} \| \delta \theta_{\bm{u}}^j \|_A^2 + \frac{L}{2} \|\delta \theta_p^{j+1}\|^2 + \frac{\tau}{2} \|\theta_p^{j+1}\|_B^2 \\
	& \leq  \frac{L}{2} \|  \delta \theta_p^j \|^2 + \frac{\tau}{2} \| \theta_p^j \|_B^2  + \frac{L}{2 C_1} h^2 \|  \nabla \delta \bar{p}_h^{j+1} \|^2 + \frac{\tau^2}{4\beta} \| R_{p}^{j+1} \|^2 + \frac{3(\omega + \epsilon^2 C_2)}{2\eta^2}   \frac{\tau^2}{\omega} \| R_{\bm{u}}^{j+1} \|_A^2 \\
	& \quad   + \frac{3(\omega + \epsilon^2 C_2)}{2\eta^2}   \tau^4L \| d_{tt} \bar{p}_h^j \|^2  +  \frac{3(\omega + \epsilon^2 C_2)}{2\eta^2}   \frac{\tau^4}{ \omega} \| d_{tt} \bar{\bm{u}}_h^j \|_A^2 \\
	& := \frac{L}{2} \|  \delta \theta_p^j \|^2 + \frac{\tau}{2} \| \theta_p^j \|_B^2 + X^{j+1}. 
\end{align*}
 Summing the above inequality from $j=1$ to $j=n$, we have 
 \begin{align*}
 	\sum_{j=1}^n\frac{1}{2} \| \delta \theta_{\bm{u}}^j \|_A^2 + \frac{L}{2} \|\delta \theta_p^{n+1}\|^2 + \frac{\tau}{2} \|\theta_p^{n+1}\|_B^2 \leq \frac{L}{2} \|  \delta \theta_p^1 \|^2 + \frac{\tau}{2} \| \theta_p^1 \|_B^2 + \sum_{j=1}^{n}X^{j+1}.
 \end{align*}
 Most of the terms in $X^{j+1}$ have already been estimated in the proof of  \cref{thm:convergence-alg2}.   To estimate $ \|  \nabla \delta \bar{p}_h^{j+1} \|^2$, note that $\nabla \delta  \bar{p}_j^{j+1} = \tau \left(  \partial_t \nabla p(t_{j+1}) - \nabla R_p^{j+1}  \right)$, then
\begin{align*}
\|  \nabla \delta \bar{p}_h^{j+1} \|^2 \leq 2 \tau^2 \left(   \| \partial_t \nabla p(t_{j+1}) \|^2  + \| \nabla R_p^{j+1} \|^2   \right) 
\end{align*}
Following the same approach for estimating $R_p^{j+1}$, we have
\begin{align*}
 \tau^2 \| \nabla R_p^{j+1} \|^2 \leq 2 \left( \tau^3 \int_{t_j}^{t_{j+1}} \| \partial_{tt} \nabla p(s) \|^2 \mathrm{d}s + \tau \int_{t_j}^{t_{j+1}} \| \partial_t \nabla \rho_p(s) \|^2 \mathrm{d}s  \right).
\end{align*}
Thus, 
\begin{eqnarray*}
 && \quad	\|  \nabla \delta \bar{p}_h^{j+1} \|^2 \\
	&& \leq 2 \tau^2  \| \partial_t \nabla p(t_{j+1}) \|^2 + 4 \left( \tau^3 \int_{t_j}^{t_{j+1}} \| \partial_{tt} \nabla p(s) \|^2 \mathrm{d}s + \tau \int_{t_j}^{t_{j+1}} \| \partial_t \nabla \rho_p(s) \|^2 \mathrm{d}s  \right).
\end{eqnarray*}
Combining \cref{est_time_2} with the bounds \eqref{ine:bound_R_p} and \eqref{ine:bound_R_u}, we obtain that
\begin{eqnarray}
	&& \qquad \qquad \frac{1}{2} \sum_{j=1}^{n}  \| \delta \theta_{\bm{u}}^j \|_A^2 + \frac{L}{2} \|\delta \theta_p^{n+1}\|^2 + \frac{\tau}{2} \|\theta_p^{n+1}\|_B^2  \label{ine:error_before_ini_ml} \\
	&&\leq \frac{L}{2} \|\delta \theta_p^{1}\|^2 + \frac{\tau}{2} \|\theta_p^{1}\|_B^2  + C \left[ \tau^3 \left( \frac{1}{\beta} + \frac{(\omega+\epsilon^2C_2) L}{\eta^2}  \right) \int_{t_0}^{t_{n+1}} \| \partial_{tt}p(s) \|^2 \, \mathrm{d}s \right. \nonumber\\
	&& \left. \quad + \tau \left( \frac{1}{\beta} + \frac{(\omega + \epsilon^2 C_2) L}{\eta^2} \right) \int_{t_0}^{t_{n+1}} \| \partial_t \rho_p(s)\|^2 \, \mathrm{d}s + \frac{(\omega + \epsilon^2 C_2)\tau^3}{\eta^2 \omega} \int_{t_0}^{t_{n+1}} \| \partial_{tt} \bm{u}(s) \|_A^2 \, \mathrm{d}s \right. \nonumber \\
	&& \left.  \quad +  \frac{(\omega + \epsilon^2 C_2)\tau}{\eta^2 \omega} \int_{t_0}^{t_{n+1}} \| \partial \rho_{\bm{u}}(s) \|_A^2 \, \mathrm{d}s \right]  + C \frac{L}{C_1} \left[  h^2   \tau^2  \sum_{j=1}^{n} \| \partial_t \nabla p(t_{j+1}) \|^2 \right. \nonumber \\
	&& \left. \quad +  h^2 \left( \tau^3   \int_{t_0}^{t_{n+1}} \| \partial_{tt} \nabla p(s) \|^2 \mathrm{d}s + \tau \int_{t_0}^{t_{n+1}} \| \partial_t \nabla \rho_p(s) \|^2 \mathrm{d}s  \right)  \right]. \nonumber
\end{eqnarray}
Using the bounds at the first time level in \cref{est_time_1_with_epsilon} and \eqref{ine:error_before_ini_ml}, 
and following the same procedure as in the proof of \cref{thm:convergence-alg2}, we obtain
the final error estimate \eqref{ine:alg3_error_u}, via the triangle inequality, the error estimates of the projections (\eqref{ine:err-rho-u-A} and \eqref{ine:err-rho-p-B}), the Poincar\'{e} inequality $\|\theta_p^{n}\| \leq C_p \| \theta_p^{n} \|_B$, and the fact that \eqref{ine:error_before_ini_ml} holds true when $n$ is replaced  by $n-1$.
\end{proof}

 \begin{remark}
	Again,  if the projection estimates \eqref{ine:err-rho-u-A} and \eqref{ine:err-rho-p-B} were parameter-robust in the $\| \cdot \|_A$-norm, $\| \cdot \|_B$-norm, and $L^2$-norm, respectively, then the overall error estimate \eqref{ine:alg3_error_u} can be made parameter-robust.
\end{remark}

\section{Numerical Experiments}\label{sec:4}
In this section, we present two numerical experiments to validate the theoretical results of the proposed decoupled algorithms and compare their performance with fully implicit schemes. The first example employs a manufactured smooth solution, while the second explores the Barry \& Mercer benchmark. To study the explicit fixed-stress method, we use a stable finite element discretization. For the pressure, we use piecewise linear finite elements, i.e., 
$
 Q_h = \{ p_h \in H_0^1(\Omega) \, | \, p_h |_T \in P_1, \; \forall T \in \mathcal{T}_h \}.
$
For the displacements, we enrich the piecewise linear functions with element bubble functions, i.e. ${\bf V}_h = {\bf V}_l \oplus  {\bf V}_b$, where ${\bf V}_l$ consists of the space of piecewise linear continuous vector valued functions on $\Omega$, i.e.,
$
{\bf V}_l = \{ {\bm v}_h \in (H_0^1(\Omega))^d \, | \, {\bm v}_h |_T \in (P_1)^d, \; \forall T \in \mathcal{T}_h \}.
$
and ${\bf V}_b$ is the space of element bubble functions. This pair ${\bf V}_h \times Q_h$, known as the MINI-element~\cite{mini}, satisfies the inf-sup condition \eqref{ine_inf-sup} (see \cite{2016RodrigoGasparHuZikatanov-a}). Throughout the experiments, we denote the error of the decoupled solution (via \cref{alg:iterative_stable} or \ref{alg:iterative_stable_2}) by $u-u_h^{dec}$, and the error of the corresponding fully implicit scheme by $u-u_h^{fully}$.

\subsection{Two-dimensional  Problem with a Manufactured Solution}
The experiment considers a unit square domain $\Omega = (0,1) \times (0,1)$, with a final time of $T = 1.0$. We take the source terms, the initial
and Dirichlet boundary conditions such that the exact solution of problem \eqref{biot1}-\eqref{biot2} is as follows
\begin{eqnarray}
u(x,y,t) &=&  \sin(\pi x t) \, \cos(\pi y t) \, xy(1-x)(1-y), \nonumber \\
v(x,y,t) &=&  \cos(\pi x t) \, \sin(\pi y t) \, xy(1-x)(1-y), \nonumber \\
p(x,y,t) &=& \cos(t+x-y) \, x y (1-x)(1-y) \nonumber.
\end{eqnarray}
Here we choose physical parameters as $\lambda =1, \mu = 2, \alpha=1, \beta=100$, and $K=1$. 

First, we focus on the explicit fixed-stress method, i.e., \cref{alg:iterative_stable}, where the stabilization parameter $L$ is chosen as $L=\alpha^2/(\lambda + \mu) = 1/3$. As noted, the MINI-element is used for spatial discretization. A right triangular grid of mesh size $h$ is used. We compute the errors in the $L^2$-norm between the exact solution and the numerical solution for the pressure, i.e., $\| p - p_h^{dec}\|$, and the corresponding errors when the problem is solved with a fully implicit method, i.e., $\| p - p_h^{fully}\|$, for comparison. These errors are shown in \cref{table_decoupled} at $t = 1$, where the convergence rates for the explicit fixed-stress scheme are also displayed. We observe that as we refine the time step $\tau$ and the mesh size $h$ appropriately, the errors decrease approximately by half, indicating first-order convergence, as expected from \eqref{ine:alg2_error_u} in \cref{thm:convergence-alg2} with $k=\ell=1$. Moreover, the errors obtained with the fully implicit method and the explicit coupling algorithm are both of a similar order of magnitude. Additionally, in this table, we also show the corresponding displacement errors $\| u - u_h^{dec}\|_A$ and $\| u - u_h^{fully}\|_A$. These displacement errors show convergence rates that match the predictions of \cref{thm:convergence-alg2} as well.
\begin{table}[htbp]
\begin{center}
\resizebox{\textwidth}{!}{%
	\begin{tabular}{cccccccc} 
		\hline
		$h$ & $\tau$ & $\| p - p_h^{fully}\|$ & $\| p - p_h^{dec}\|$ & rate & $\| u - u_h^{fully}\|_A$ & $\| u - u_h^{dec}\|_A$ & rate \\    
		\hline
		$1/40$ & $1/10$ & $1.027e-03$ & $2.937e-03$  &  & $1.576e-03$  & $1.847e-03$  & \\
		$1/80$ & $1/20$ & $5.343e-04$ & $1.593e-03$ & $0.88$ & $4.343e-04$ & $6.775e-04$ & $1.45$\\
		$1/160$ & $1/40$ & $2.716e-04$ & $8.186e-04$ & $0.96$ & $1.374e-04$& $2.963e-04$ & $1.19$ \\     
		$1/320$ & $1/80$ & $1.368e-04$ & $4.135e-04$ & $0.99$ & $5.401e-05$& $1.380e-04$ & $1.10$  \\
		\hline
	\end{tabular}%
}
\end{center}
\caption{Pressure ($L^2$-norm) and displacement (energy norm) errors for the explicit fixed-stress (\cref{alg:iterative_stable}) and fully implicit schemes, with convergence rates for the explicit case, for varying $\tau$ and $h$.}
\label{table_decoupled}
\end{table}
   
Next, we consider the decoupled \cref{alg:iterative_stable_2} for the stabilized P1-P1 finite element method. Following the recommendation given in \cite{Pe2025}, the parameter $L$ in the stabilization terms is chosen as $L=(3/2)(\alpha^2/(\lambda + \mu)) = 1/2$. In \cref{table_decoupled_2}, we show the errors in the $L^2$-norm for the pressure, i.e., $\| p - p_h^{dec}\|$, and in the energy norm for the displacements, i.e., $\| u - u_h^{dec}\|_A$, for different space and time discretization parameters. Additionally, for comparison, we display the corresponding errors obtained for the fully implicit version, i.e., $\| p - p_h^{fully}\|$ for the pressure and $\| u - u_h^{fully}\|_A$ for the displacements. We also report the convergence rates for the decoupled algorithm, observing first-order convergence as expected from \cref{thm:convergence-alg3}. 
\begin{table}[htbp]
\begin{center}
\resizebox{\textwidth}{!}{
		\begin{tabular}{cccccccc}  
			\hline
		          $h$ & $\tau$ & $\| p - p_h^{fully}\|$ &   $\| p - p_h^{dec}\|$ & rate & $\| u - u_h^{fully}\|_A$ & $\| u - u_h^{dec}\|_A$ & rate\\    
			\hline
			  $1/40$ & $1/10$ & $1.060e-03$ & $3.017e-03$  &  & $8.338e-04$  & $1.213e-03$ & \\
			  $1/80$ & $1/20$ & $5.423e-04$ &  $1.619e-03$ & $0.90$ &  $2.428e-04$ & $5.363e-04$ & $1.18$\\
			  $1/160$ & $1/40$ & $2.736e-04$ & $8.269e-04$ & $0.97$  & $9.628e-05$ &  $2.602e-04$ & $1.04$\\ 	
			  $1/320$ & $1/80$ &  $1.373e-04$ & $4.164e-04$  & $0.99$ & $4.413e-05$ & $1.307e-04$ & $0.99$\\
			\hline
		\end{tabular}
	}
\end{center}
\caption{Pressure ($L^2$-norm) and displacement (energy norm) errors for the explicit fixed-stress (\cref{alg:iterative_stable_2}) and fully implicit schemes, with convergence rates for the explicit case, for varying $\tau$ and $h$.}
\label{table_decoupled_2}
\end{table}

\subsection{Barry \& Mercer's Problem}
A well-known poroelastic benchmark on a finite two-dimensional domain is Barry \& Mercer's problem~\cite{Barry}. It models the behavior of a rectangular uniform porous material $[0,a]\times[0,b]$ with a pulsating point source, drained on all sides, and with zero tangential displacements assumed on the whole boundary. The point-source corresponds to a sine wave and is given by \(f(t) = 2\upsilon \,\delta_{(x_0,y_0)}\sin(\upsilon \,t)\), where $\upsilon = \displaystyle\frac{(\lambda + 2\mu)K}{a\,b}$ and $\delta_{(x_0,y_0)}$ is the Dirac delta at the point $(x_0,y_0)$. In \cref{BarryMercer}, the computational domain and the boundary conditions are shown.
\begin{figure}[!htb]
\begin{center}
\includegraphics*[width = 0.45\textwidth]{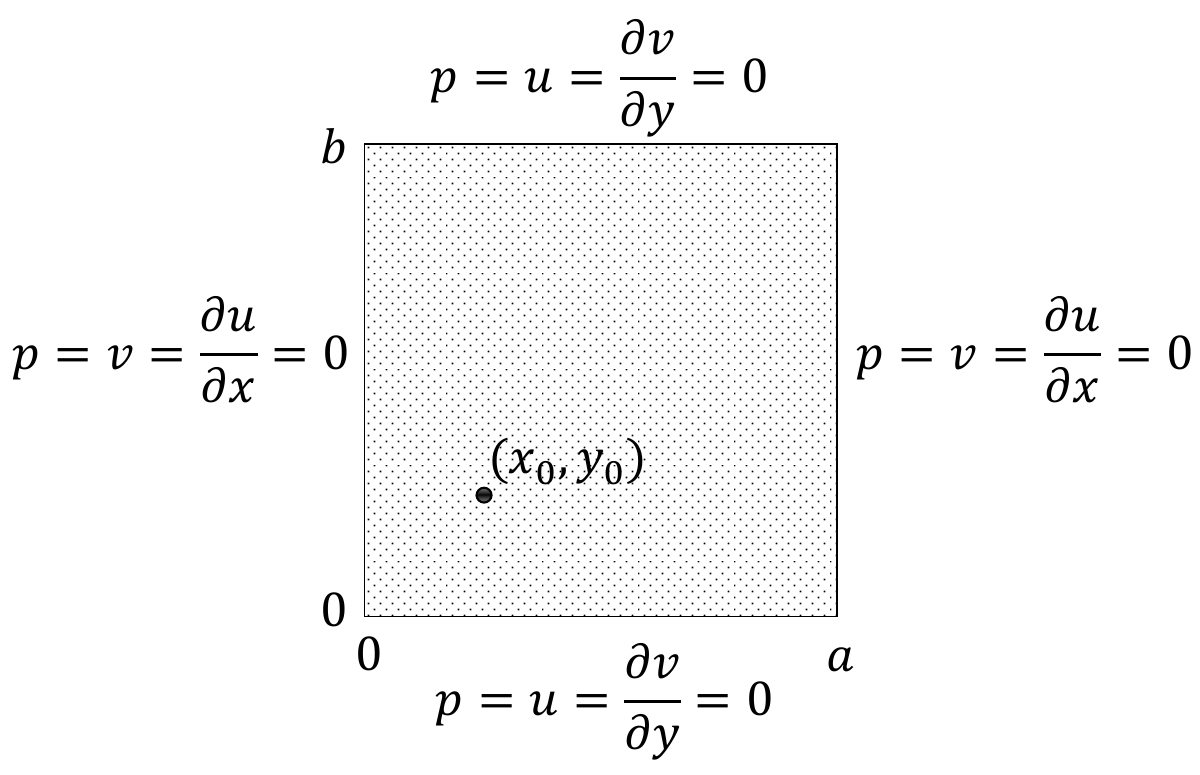}
\caption{The computational domain and boundary conditions for the Barry and Mercer's source problem.}
\label{BarryMercer}
\end{center}
\end{figure}
An analytical solution to this problem can be formulated in terms of a series expansion, see \cite{Barry} for details. We consider the square domain $(0,1)\times(0,1)$, i.e., $a=b=1$, and the physical  parameters are taken from \cite{osti_10471422}. In this way, Young's modulus and Poisson's ratio are chosen as  $E = 10^5$, $\nu = 0.1$, respectively, and the rest of parameters are fixed as $\alpha=1$, $\beta=0$ and $K = 10^{-2}$.  The source is positioned at the point $(1/4,1/4)$, and a right triangular grid of mesh size $h=2^{-6}$ is used for the simulations. The final time is set to $T=\pi/(2v)$, and the time step size is $\tau = \pi/(40v)$.  

First, we consider the explicit fixed-stress algorithm with stabilization parameter $L=\alpha^2/(\lambda + \mu)$ for the MINI-element discrete scheme. In \cref{Mini_BM}, we present the analytical solution and the numerical solutions obtained by using the decoupled \cref{alg:iterative_stable} and by using a fully implicit scheme, along the straight line $x = 0.25$. As we can observe, both numerical approximations exhibit good agreement with the analytical solution. Moreover, the solution obtained by the decoupled algorithm is very close to the one obtained by the coupled solution, demonstrating that the decoupled algorithm is a viable and effective choice for the simulations of Biot's model.
\begin{figure}[!htb]
\begin{center}
\begin{tabular}{ccc} 
\includegraphics*[width = 0.3\textwidth]{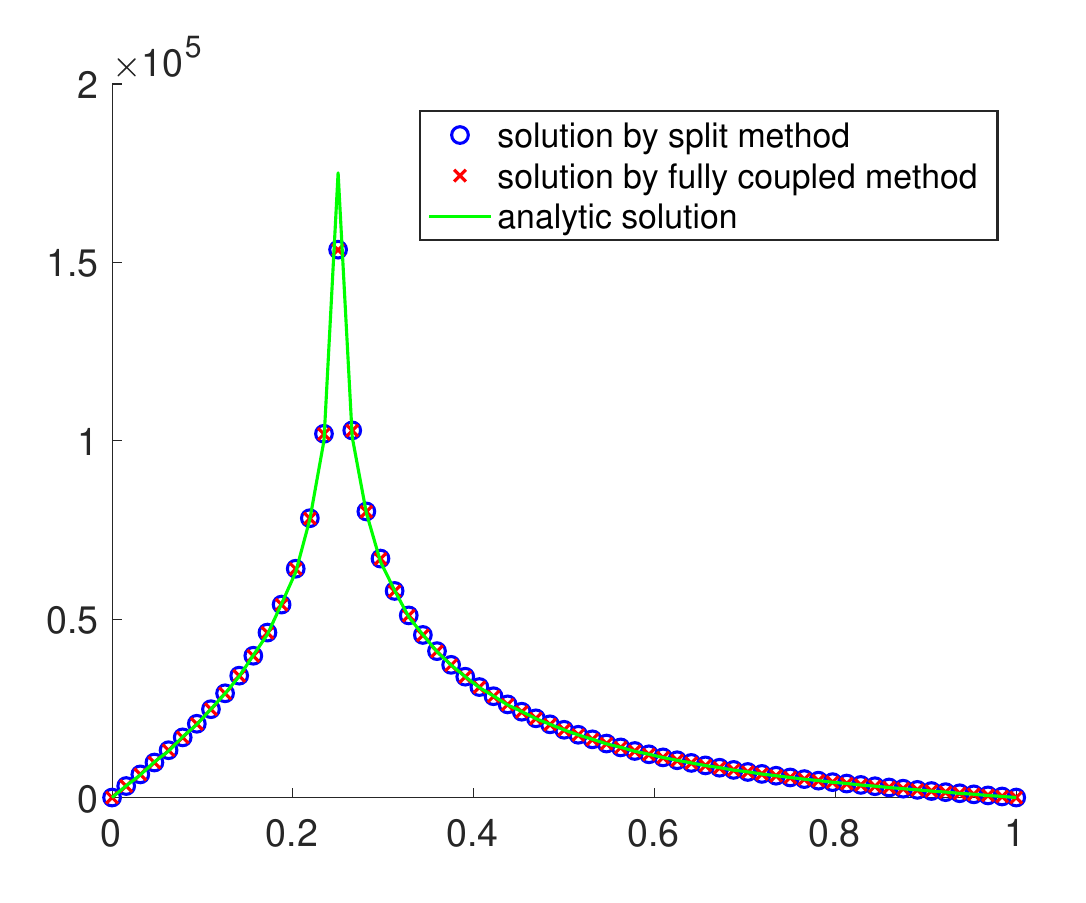} &
\includegraphics*[width = 0.3\textwidth]{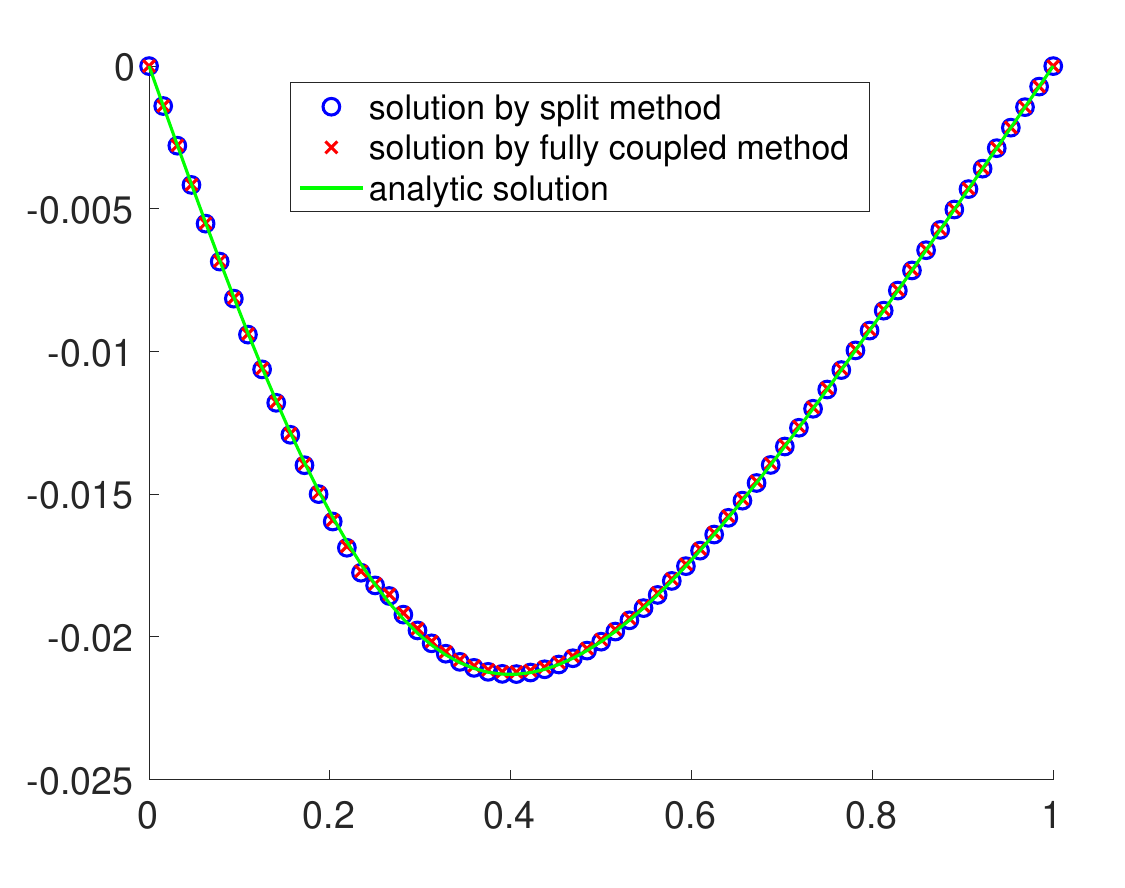}   & 
\includegraphics*[width = 0.3\textwidth]{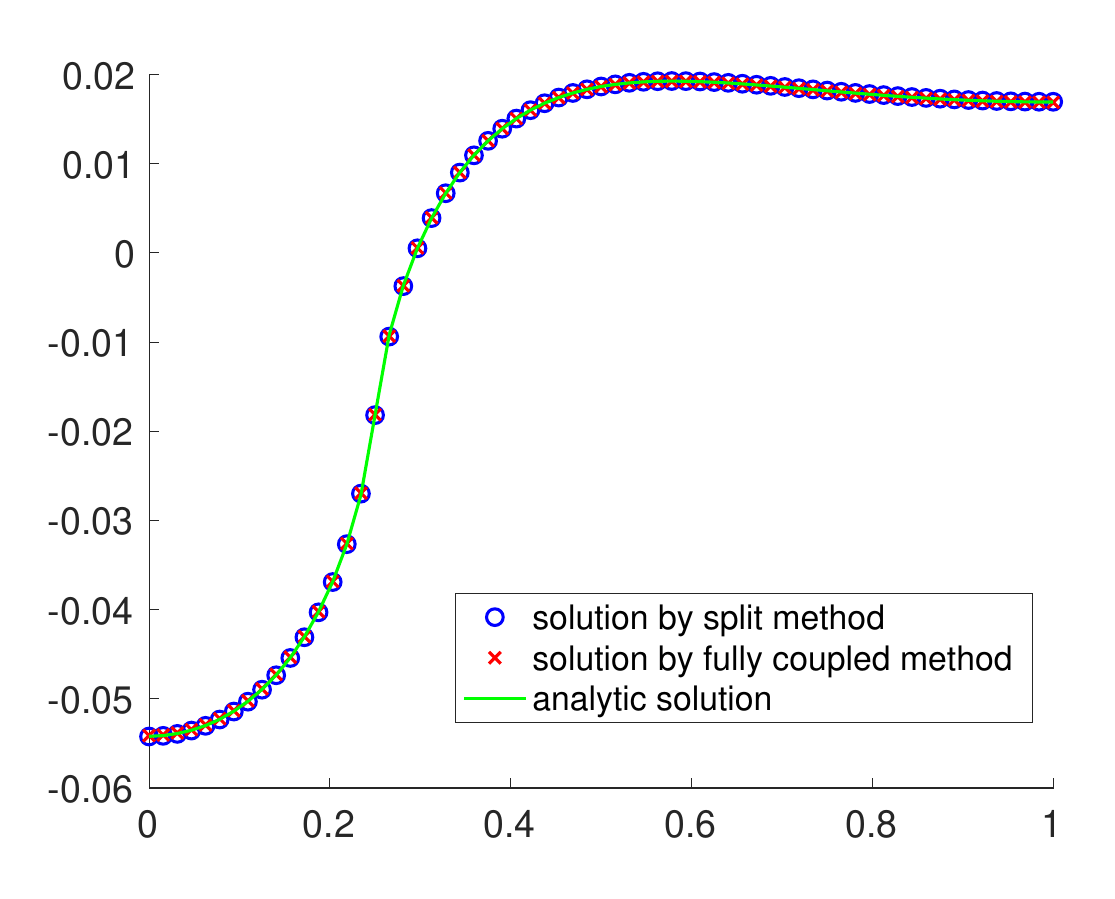}\\
(a) pressure $p$ &
(b) $x$-displacement $u$ &
(c) $y$-displacement $v$ 
\end{tabular}
\caption{Comparison of analytical, explicit split, and fully coupled solutions for the Barry \& Mercer problem using MINI-element: (a) pressure $p$, (b) x-displacement $u$, and (c) y-displacement $v$.}
\label{Mini_BM}
\end{center}
\end{figure}

Next, we consider the stabilized linear finite element discretization with stabilization parameter $L =(3/2)(\alpha^2/(\lambda + \mu))$. In \cref{P1_P1_BM}, we display the analytical solution together with the numerical solutions obtained using the decoupled \cref{alg:iterative_stable_2} and a fully implicit scheme, evaluated along the line $x = 0.25$. The obtained results are very satisfactory, as in the previous case. In fact, both numerical approximations show very good agreement with the analytical solution. In addition, the solution produced by the decoupled algorithm is very similar to that obtained with the fully coupled scheme, which supports the choice of the decoupled approach as a reliable and practical alternative for the numerical simulation of Biot’s model.

\begin{figure}[!htb]
\begin{center}
\begin{tabular}{ccc} 
\includegraphics*[width = 0.3\textwidth]{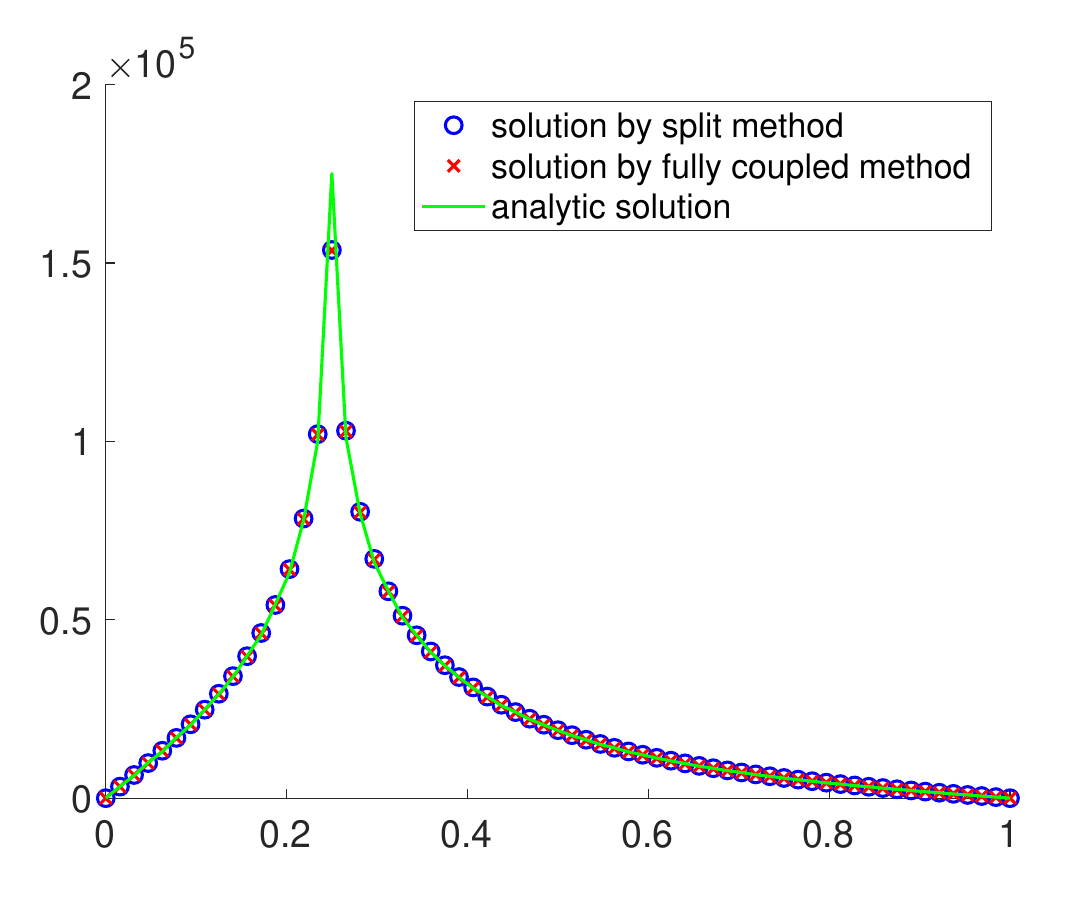} &
\includegraphics*[width = 0.3\textwidth]{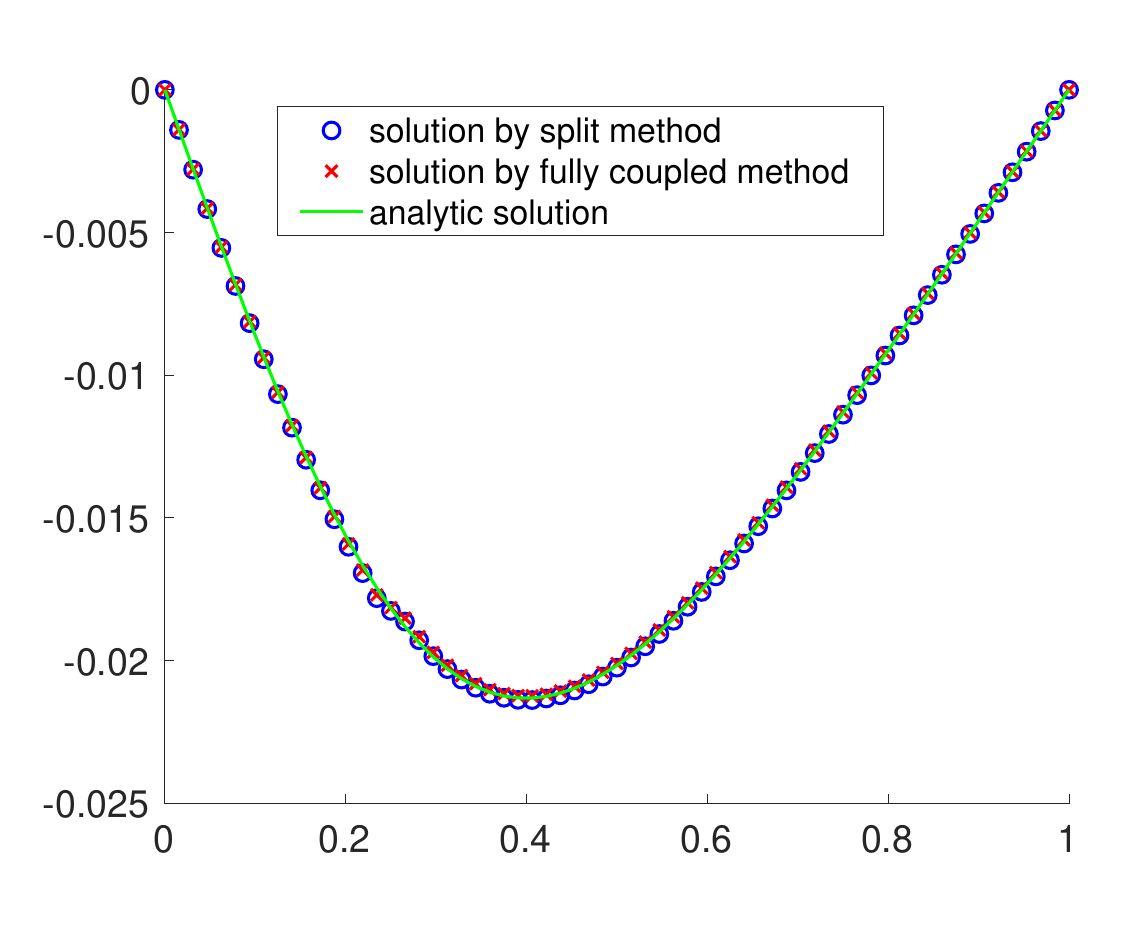}   & 
\includegraphics*[width = 0.3\textwidth]{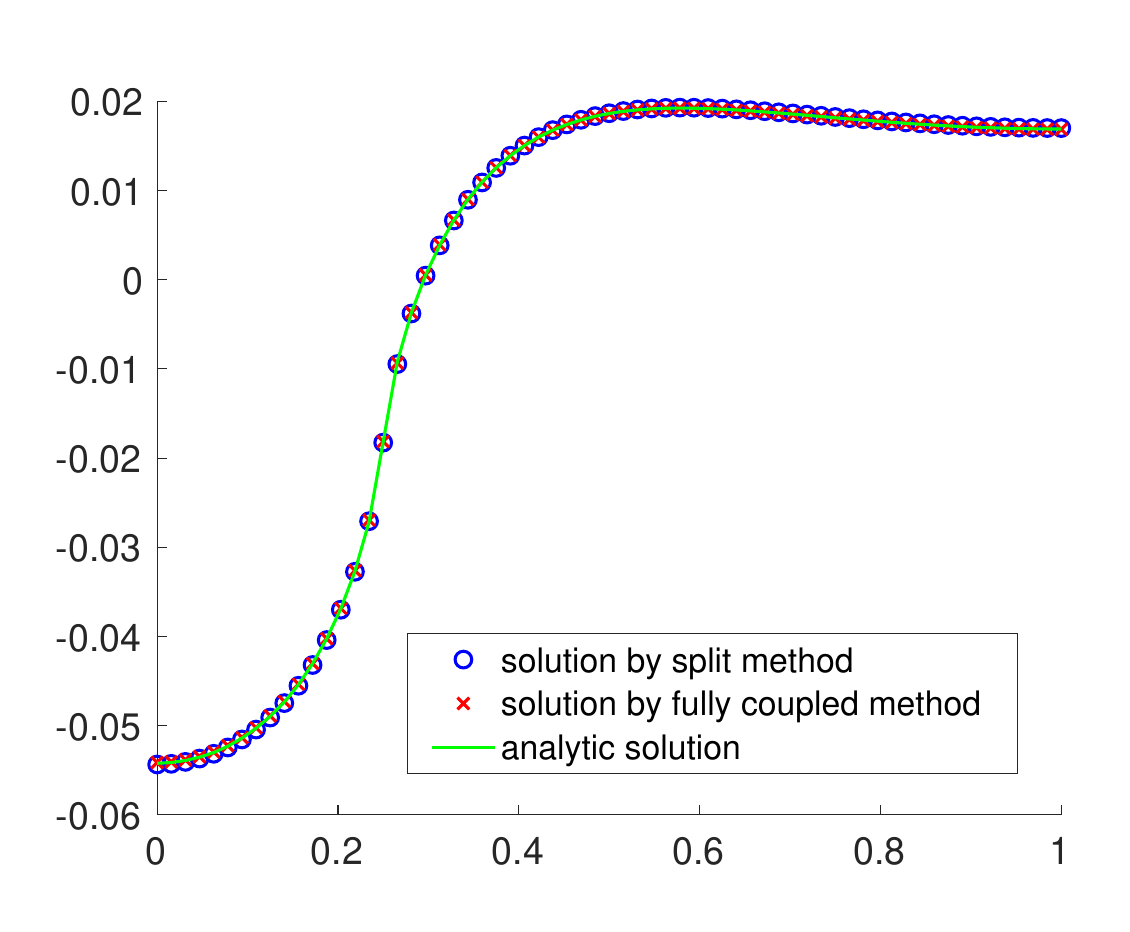} \\
(a) pressure $p$ &
(b) $x$-displacement $u$ &
(c) $y$-displacement $v$ 
\end{tabular}
\caption{Comparison of analytical, explicit split, and fully coupled solutions for the Barry \& Mercer problem using stabilized P1-P1 elements: (a) pressure $p$, (b) x-displacement $u$, and (c) y-displacement $v$.}
\label{P1_P1_BM}
\end{center}
\end{figure}

\section{Conclusions}\label{sec:conclusions}
In this work, we have investigated sequential alternatives to monolithic methods for solving the coupled flow–deformation problem in porous media. Specifically, we theoretically studied the explicit fixed-stress split scheme. Our results provide, to the best of our knowledge, the first convergence analysis of this split scheme for Biot’s equations. In particular, we have shown that the algorithm achieves optimal convergence, provided that the employed finite element discretization satisfies an inf–sup condition. Furthermore, with the aim of proposing a simple robust scheme for Biot’s model, we have introduced a similar decoupled algorithm based on a stabilization of piecewise linear finite elements for both variables, and we have demonstrated that this new scheme also exhibits optimal convergence. Overall, these findings confirm that explicit decoupling methods constitute a computationally efficient and theoretically sound alternative for the fully coupled poroelastic systems.

\bibliographystyle{siamplain}
\bibliography{mybibfile2}
\end{document}